\numberwithin{equation}{section}
\numberwithin{figure}{section}
\newtheorem{thm}{Theorem}[section]
\newtheorem{conj}[thm]{Conjecture}
\newtheorem{cor}[thm]{Corollary}
\newtheorem{prop}[thm]{Proposition}
\newtheorem{prob}[thm]{Problem}
\newtheorem{exam}[thm]{Example}
\newtheorem{rem}[thm]{Remark}
\begin{document}
\begin{center}
	{\large \bf A quantitative way to $e$-positivity of trees}
\end{center}

\begin{center}
Ethan Y.H. Li
\\[6pt]
\end{center}

\begin{center}
School of Mathematics and Statistics,\\
Shaanxi Normal University, Xi'an, Shaanxi 710119, P. R. China \\[6pt]
Email: 
{\tt yinhao\_li@snnu.edu.cn}
\end{center}

\noindent\textbf{Abstract.}
In 2020, Dahlberg, She, and van Willigenburg conjectured that the chromatic symmetric function of any tree with maximum degree at least 4 is not $e$-positive. Zheng and Tom verified this conjecture for all trees with maximum degree at least 5 and spiders with maximum degree 4, and in their proofs the following necessary condition given by Wolfgang plays an important role: every connected graph having $e$-positive chromatic symmetric function must contain a connected partition of every type.

In order to make further progress on this conjecture, we refine Wolfgang's result in a quantitative way. At first, we give an explicit formula for the $e$-coefficients of trees in terms of their connected partitions, by which $e$-positivity is equivalent to a series of inequalities for the numbers of connected partitions. Based on this formula, we present several necessary conditions on the numbers of connected partitions or acyclic orientations for trees to be $e$-positive. These necessary conditions turn out to be characterizations on the structure of $e$-positive trees, and as sample applications we prove the non-$e$-positivity of several classes of trees with maximum degree 3 or 4. We further make more discussions and calculations on trees with maximum degree 4 and having a connected partition of every type, which inspire us to come up with a list of open problems towards the final resolution of the above conjecture.\\

\noindent \emph{AMS Mathematics Subject Classification 2020:} 05E05, 05C05, 05C15

\noindent \emph{Keywords:} chromatic symmetric function; $e$-positivity; connected partition; acyclic orientation; tree;

\section{Introduction}\label{sec-intro}

Given a graph $G$ with $V(G) = \{v_1,\ldots,v_d\}$, the \textit{chromatic symmetric function} of $G$ is defined as
\begin{equation*}
X_G = \sum_{\kappa} x_{\kappa(v_1)}\cdots x_{\kappa(v_d)},
\end{equation*}
where $\kappa: V(G) \to \{1,2,\ldots\}$ ranges over all proper colorings of $G$, i.e., $\kappa(u) \neq \kappa(v)$ for any edge $uv \in E(G)$. This concept was introduced by Stanley \cite{Sta95} in 1995 as a symmetric function generalization of the famous chromatic polynomial $\chi_G(n)$, which can be obtained by setting $x_1=x_2=\cdots =x_n = 1$ and $x_{n+1} = x_{n+2} = \cdots = 0$.

The famous Stanley-Stembridge conjecture states that the incomparability graph of each $(3+1)$-free poset is a non-negative linear expansion of elementary symmetric functions (also called $e$-positive), which is equivalent to a conjecture on immanants of Jacobi-Trudi matrices \cite{Sta95,SS93}. During these decades, there have been plenty of works on this conjecture, see \cite{BC18,DFvW20,DSvW20,Gas96,GS01,G-P13,Hai93,SW16} and the references therein. In particular, Hikita firstly proved this conjecture \cite{Hik24}, and later Griffin et al. provided another proof \cite{GMRWW25}. These incomparability graphs are a proper subclass of clawfree graphs, whose chromatic symmetric functions are also conjectured by Stanley (and Gasharov) to be $s$-positive \cite{Sta98}, i.e., a non-negative linear expansion of Schur functions. For progress on this conjecture see \cite{Gas99,WW20} for instance. From now on we shall say that a graph $G$ is $e$-positive (resp. $s$-positive) if $X_G$ is $e$-positive (resp. $s$-positive). In another direction Stanley \cite{Sta95} asked whether two trees are isomorphic if and only if their chromatic symmetric functions are equal, which is later called Stanley's tree isomorphism conjecture. Several classes of trees are proved valid for this conjecture and there are also a lot of works related to it, see \cite{AdOZ23,ANRS24,CS20,HJ19,LS19,MMW11,NW99,OS14,WYZ24}.

In addition, Dahlberg, She, and van Willigenburg \cite{DSvW20} explored a new and interesting area of $e$-positivity and $s$-positivity of trees. They proved that for any $n$-vertex tree $T$, if the maximum degree $\Delta(T)$ of $T$ is at least $\log_2 n + 1$ then it is not $e$-positive, and if $\Delta(T) \ge \lceil n/2 \rceil$ then it is not $s$-positive. Wang and Wang \cite{WW20} further studied the positivity of spiders (trees having exactly one vertex of degree at least 3), and they proved the $e$-positivity of spiders $S(4m+2,2m,1)$ in collaboration with Tang \cite{TWW24}. Dahlberg, She, and van Willigenburg \cite{DSvW20} proposed the following conjecture, which directly relates $e$-positivity to the maximum degree of a tree.

\begin{conj}[\cite{DSvW20}]\label{conj-nonepos}
If $T$ is a tree with $\Delta(T) \ge 4$, then $T$ is not $e$-positive.
\end{conj}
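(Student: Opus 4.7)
The plan is to exploit the quantitative refinement of Wolfgang's condition promised in the abstract, namely an explicit formula expressing each $e$-coefficient $c_\lambda(T)$ as a signed linear combination of the numbers $N_\mu(T)$ of connected partitions of $T$ of type $\mu$. Under such a formula, $e$-positivity of $T$ is equivalent to a system of inequalities in the $N_\mu(T)$, and to disprove $e$-positivity for a given $T$ it suffices to exhibit a single $\lambda$ with $c_\lambda(T) < 0$.

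First I would fix a vertex $v$ of degree $d \ge 4$ and the branches $T_1, \ldots, T_d$ at $v$, and search for a witness partition $\lambda = \lambda(T,v)$ depending on local data at $v$. The results of Zheng and Tom for $\Delta(T) \ge 5$ and for degree-$4$ spiders suggest that hook types $\lambda = (n-k, 1^k)$, or small perturbations thereof, are natural candidates: connected partitions of such a type are essentially a choice of central block containing $v$ together with a dissection of the remaining leaves, so the counts $N_\mu(T)$ factor branch-by-branch and become estimable. The goal is to extract from $d \ge 4$ a polynomial inequality that, after substitution into the formula, forces $c_\lambda(T) < 0$.

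Second, I would stratify the argument by the set $V_{\ge 4}(T) = \{v : \deg(v) \ge 4\}$. The base case $|V_{\ge 4}(T)| = 1$ splits into the spider subcase (already handled in \cite{TWW24} and in this paper) and the non-spider subcase, which should still be tractable because the unique branching vertex behaves essentially like that of a spider after absorbing the short side-branches into fictitious ``legs''. The inductive case $|V_{\ge 4}(T)| \ge 2$ would be attacked by pruning: given a leaf $\ell$ sufficiently far from a chosen degree-$\ge 4$ vertex $v$, pass from $T$ to $T' = T - \ell$ and relate $c_\lambda(T)$ to $c_{\lambda'}(T')$ via a local restriction identity for connected partitions, so that negativity propagates from $T'$ back to $T$.

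The main obstacle I anticipate is precisely the inductive step with multiple high-degree vertices: contributions from different local regions can cancel in $c_\lambda(T)$, so no single $\lambda$ may serve as a uniform witness across the family. Overcoming this will likely require either a canonical choice of a principal degree-$\ge 4$ vertex of $T$ (a centroid of $V_{\ge 4}(T)$, or one maximizing some weighted branch statistic) with $\lambda$ tailored to it, or an inductive contraction that preserves the existence of a degree-$\ge 4$ vertex while strictly decreasing a suitable complexity parameter. Controlling the signs of the correction terms in either strategy is the real difficulty, and is presumably why the conjecture remains open and why the present paper must settle for strong partial results together with a list of open problems pointing toward its eventual resolution.
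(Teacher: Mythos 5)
You have set out to prove something that the paper itself does not prove: the statement in question is Conjecture \ref{conj-nonepos}, which remains open. The paper only establishes partial results toward it --- an explicit formula for $[e_\lambda]X_T$ in terms of connected partition counts (Theorem \ref{thm-e-coe}), necessary conditions such as Theorems \ref{thm-n22} and \ref{thm-nece-cond-acyc}, non-$e$-positivity of specific families (Theorems \ref{thm-one-p2} and \ref{thm-acyc-caterpillar-nonepos}), and a reduction of the conjecture to open Problems \ref{prob-even} and \ref{prob-322}. Your proposal is in the same spirit as the paper's quantitative refinement of Wolfgang's criterion, but it is a research plan, not a proof, and you concede as much in your final paragraph. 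A sketch that names its own unresolved obstacle cannot be accepted as a proof of the conjecture.

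Concretely, the gaps are these. First, the choice of witness partition is not pinned down, and your suggested hook types $(n-k,1^k)$ are unlikely to work: the coefficient $[e_{(n-1,1)}]$-type data is governed by leaf counts and behaves well even for many non-$e$-positive trees, and the paper's computations point instead to partitions of the form $(s,t^k)$, in particular $(3,2^{(n-3)/2})$, whose negativity for all CPET trees with $\Delta=4$ is itself only conjectural (Problem \ref{prob-322}) and already presupposes an odd number of vertices (Problem \ref{prob-even}). Second, your pruning step assumes a ``local restriction identity'' relating $c_\lambda(T)$ to $c_{\lambda'}(T-\ell)$ with controlled signs; no such identity is available, precisely because Theorem \ref{thm-e-coe} expresses $[e_\lambda]X_T$ as an alternating sum $\sum_{\mu\dashv\lambda}(-1)^{\ell(\lambda)-\ell(\mu)}b_\mu w(B_{\lambda,\mu})$ in which deleting a leaf changes every $b_\mu$ simultaneously and cancellation is exactly the phenomenon one must control. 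Third, counting arguments of Wolfgang type cannot close the problem on their own: the spiders $S(6m,6m-2,1,1)$ and the four non-spider CPET trees in Figure \ref{fig-CPET} have connected partitions of every type, so any successful argument must engage the finer coefficient inequalities (as in Theorem \ref{thm-epos-equiv} or Corollary \ref{cor-reduced-formula}), and it is precisely there that no general sign argument is currently known. Until the inductive or witness-coefficient step is actually carried out, the statement stands as a conjecture, not a theorem.
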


Zheng \cite{Zhe22} proved that any tree $T$ with $\Delta(T) \ge 6$ is not-$e$-positive, and provided an explicit formula for calculating the $e$-coefficients of spiders. Tom \cite{Tom26} further proved that any tree with $\Delta(T) = 5$ is not $e$-positive and any spider $T$ with $\Delta(T) = 4$ is not $e$-positive. Most cases in their proofs use the following theorem of Wolfgang on $e$-positivity and the existence of connected partitions. Recall that given a graph $G$, a set partition $\{B_1,B_2,\ldots,B_k\}$ of its vertex set $V(G)$ is called a \textit{connected partition} of $G$ if for any $1 \le i \le k$ the subgraph of $G$ induced by the vertices in $B_i$ is connected. The \textit{type} of a connected partition $\{B_1,B_2,\ldots,B_k\}$ is defined as the integer partition (for detailed definition see Section \ref{sec-tool}) formed by the cardinalities $|B_1|,|B_2|,\ldots,|B_k|$.

\begin{thm}[\cite{Wol97}]\label{thm-epos-cp}
If a connected graph $G$ on $n$ vertices is $e$-positive, then it contains a connected partition of type $\lambda$ for any integer partition $\lambda$ of $n$.
\end{thm}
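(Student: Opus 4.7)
The plan is to prove the contrapositive: assuming $G$ has no connected partition of type $\lambda$ for some $\lambda \vdash n$, I will produce a strictly negative $e$-coefficient of $X_G$. The argument rests on reconciling two classical expansions of $X_G$.

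First, I would record the bijective equivalence between connected partitions and spanning forests: $G$ admits a connected partition of type $\lambda$ if and only if $G$ admits a spanning forest whose components have sizes forming $\lambda$. Indeed, given a CP $\{V_1,\ldots,V_k\}$, selecting any spanning tree of each $G[V_i]$ yields such a spanning forest, and conversely the component partition of any such spanning forest is a CP. Thus the condition ``no CP of type $\lambda$'' translates into ``no spanning forest of type $\lambda$,'' a purely enumerative statement about spanning subgraphs.

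Next, I would invoke Stanley's power-sum formula
$$X_G = \sum_{S \subseteq E(G)} (-1)^{|S|} p_{\mathrm{type}(S)}.$$
Spanning forests of type $\lambda$ contribute exactly $(-1)^{n-\ell(\lambda)}$ to the coefficient $[p_\lambda] X_G$, while spanning subgraphs of the same type containing cycles contribute signed corrections. On the other hand, Newton's identities (equivalently the generating-function identity $E(t) = \exp\bigl(\sum_{k\ge1}(-1)^{k-1} p_k t^k/k\bigr)$) give an explicit signed expansion of each $e_\mu$ in the $p$-basis. Equating the two $p$-basis expressions for $X_G = \sum_\mu c_\mu e_\mu$ across all partitions of $n$ yields a linear system relating the $c_\mu$'s to the spanning-subgraph counts.

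The heart of the proof is to exploit this system to isolate the forest-count of type $\lambda$. Concretely, the objective is to exhibit a non-negative linear combination $\Phi(X_G) = \sum_\mu a_\mu c_\mu$, with $a_\mu \ge 0$, whose value equals the number of spanning forests of type $\lambda$ plus non-negative correction terms contributed by cyclic spanning subgraphs. Under the $e$-positivity hypothesis, $\Phi(X_G)\ge 0$, and the correction terms being non-negative forces the forest count to be positive, producing a CP of type $\lambda$. The main obstacle I anticipate is precisely the construction of $\Phi$: the alternating signs coming from both the $p$-to-$e$ transition and from cyclic spanning subgraphs interfere, and handling them will likely require a sign-reversing involution on non-forest spanning subgraphs. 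A potentially cleaner alternative is to generalize Stanley's interpretation $[e_n] X_G = \#\{\text{acyclic orientations with a unique sink}\}$ via iterated sink-stripping: if $[e_\lambda] X_G$ (up to signs) counts acyclic orientations whose sink sequence has sizes forming $\lambda$, then the absence of a CP of type $\lambda$ obstructs this sink sequence and yields a direct negativity, bypassing the $p$-basis altogether.
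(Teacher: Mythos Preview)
Your proposal is not a proof but a plan with an explicitly acknowledged gap: you never construct the functional $\Phi$, and your alternative via ``sink-stripping'' is stated as a hope rather than a result. More importantly, the obstacle you worry about---sign cancellations from cyclic spanning subgraphs of type $\lambda$---is illusory under the hypothesis. You correctly note that a spanning \emph{forest} of type $\lambda$ yields a connected partition of type $\lambda$; but in fact \emph{any} spanning subgraph $(V,S)$ whose component sizes form $\lambda$ already yields such a connected partition, simply by taking its vertex partition into components. Hence if $G$ has no connected partition of type $\lambda$, then there is no $S\subseteq E(G)$ with $\mathrm{type}(S)=\lambda$ at all, cyclic or not, and Stanley's formula gives $[p_\lambda]X_G=0$ on the nose. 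No involution or correction term is needed.

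From that point the paper's argument (given there only for trees, as Proposition~\ref{prop-tree-epos-cp}) finishes the job for arbitrary connected $G$. Writing $X_G=\sum_\mu c_\mu e_\mu$ and expanding each $e_\mu$ via \eqref{eq-ep}, comparison of $p_\lambda$-coefficients yields
\[
0 \;=\; (-1)^{n-\ell(\lambda)} z_\lambda\,[p_\lambda]X_G \;=\; \sum_{\mu \dashv \lambda} c_\mu\, OB_{\lambda,\mu}.
\]
All $OB_{\lambda,\mu}$ are non-negative integers, and for connected $G$ the term with $\mu=(n)$ is strictly positive: $OB_{\lambda,n}>0$ always, and $c_n=\mathrm{sink}(G,1)>0$ by Stanley's acyclic-orientation interpretation. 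Therefore some other $c_\mu$ must be negative, contradicting $e$-positivity. So the ingredients you assembled were sufficient; what was missing was the single observation that the absence of a connected partition of type $\lambda$ kills the entire $p_\lambda$-coefficient, not merely its forest contribution. The paper itself attributes the general statement to Wolfgang's Hopf-algebraic methods and only writes out the tree case, but its tree argument extends verbatim once one replaces ``$c_n=n$'' by ``$c_n>0$ for connected $G$''.
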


Unfortunately, this criterion sometimes fails. For example, for $m \ge 1$ the spider $S(6m,6m-2,1,1)$ is not $e$-positive but has a connected partition of every type, see \cite{Tom26}. Therefore, in this paper, we proceed to explore a quantitative version of Theorem \ref{thm-epos-cp}. Firstly, we give a formula for calculating the $e$-coefficients of any tree in terms of its connected partitions, which yields an equivalent condition for $e$-positivity. Moreover, the number of acyclic orientations with a fixed number of sinks can also be derived from this formula. Using these results, we present several necessary conditions for trees to be $e$-positive, which involves the numbers of connected partitions and acyclic orientations. As applications, we provide (local) structure characterizations of $e$-positive trees and then deduce the non-$e$-positivity of several classes of trees, which provide more evidence for Conjecture \ref{conj-nonepos}. Further, we make more discussions and computations on the trees with $\Delta=4$ and having a connected partition of every type. Based on these results, we propose several open problems, which may further advocate the final resolution of Conjecture \ref{conj-nonepos}.

This paper is organized as follows. In Section \ref{sec-tool} we introduce some basic results of symmetric functions, and develop the formulas on $e$-coefficients, connected partitions and acyclic orientations. In Section \ref{sec-appli} we present the necessary conditions mentioned above for trees to be $e$-positive, and we apply these conditions to prove the non-$e$-positivity of certain classes of trees. In Section \ref{sec-deg4}, we make more discussions on trees with $\Delta=4$ and having a connected partition of every type, and provide more explicit formulas. Finally, based on the above results and our computations, we come up with a list of open problems towards Conjecture \ref{conj-nonepos}.

\section{Formulas on \texorpdfstring{$e$}{e}-coefficients of trees}\label{sec-tool}

In this section, we shall introduce some basic definitions and derive several formulas on $e$-coefficients of trees, in which the numbers of connected partitions and acyclic orientations play an important role.

We first present some basic terminology in the theory of symmetric functions, see \cite{Mac15,RS15,StaEC2} for detailed information. Let $\mathbf{x} = \{x_1,x_2,\ldots\}$ be a set of countably infinite indeterminates. The algebra $\mathbb{Q}[[\mathbf{x}]]$ is defined as the commutative algebra of formal power series in $\mathbf{x}$ over the rational field $\mathbb{Q}$. The \textit{algebra of symmetric functions} $\Lambda_{\mathbb{Q}}(\mathbf{x})$ is defined to be the subalgebra of $\mathbb{Q}[[\mathbf{x}]]$ consisting of formal power series $f(\mathbf{x})$ of bounded degree and
\[
f(\mathbf{x})=f(x_1,x_2,\ldots) = f(x_{\omega(1)},x_{\omega(2)},\ldots)
\]
for any permutation $\omega$ of positive integers.

The bases of $\Lambda_{\mathbb{Q}}(\mathbf{x})$ are indexed by integer partitions. An \textit{integer partition} of $n$ is a sequence $\lambda = (\lambda_1,\ldots,\lambda_\ell)$ of positive integers such that $\lambda_1 + \lambda_2 + \cdots + \lambda_\ell = n$. The number $\ell=\ell(\lambda)$ is called the \textit{length} of $\lambda$. We write $\lambda \vdash n$ if $\lambda$ is an integer partition of $n$. By abuse of notation we write the partition $(n)$ as $n$, and we write $\lambda \vdash \mu$ or $\mu \dashv \lambda$ if $\lambda$ is a \textit{refinement} of $\mu$, i.e., $\lambda$ can be obtained by splitting the parts of $\mu$. For instance, $(2,2,1,1)$ is a refinement of $(3,2,1)$.

In this paper we mainly use two bases: the elementary symmetric functions and the power sum symmetric functions. For $\lambda = (\lambda_1,\ldots,\lambda_\ell)$, the \textit{elementary symmetric function} $e_{\lambda}$ is defined as
\[
e_{\lambda} = e_{\lambda_1}e_{\lambda_2}\cdots e_{\lambda_\ell},
\]
where for any $k \ge 1$
\begin{align*}
e_{k} = \sum_{1 \le j_1 < j_2 < \cdots < j_k} x_{j_1}x_{j_2} \cdots x_{j_k}. 
\end{align*}
Similarly,
For $\lambda = (\lambda_1,\ldots,\lambda_\ell)$, the \textit{power sum symmetric function} $p_{\lambda}$ is defined as
\[
p_{\lambda} = p_{\lambda_1}p_{\lambda_2}\cdots p_{\lambda_\ell},
\]
where for any $k \ge 1$
\begin{align*}
p_{k} = \sum_{i \ge 1} x_i^k. 
\end{align*}

Then we shall present the transition formulas between these two bases, for which we follow \cite{RS15} to introduce the concept of brick tabloids. At first, the \textit{Young diagram} of $\lambda = (\lambda_1,\ldots,\lambda_{\ell}) \vdash n$ is formed by $n$ left-justified boxes such that there are $\lambda_i$ boxes in the $i$-th row from bottom to top. Then a \textit{brick tabloid} of shape $\mu$ and content $\lambda$ is a Young diagram of $\mu$ with each row divided into ``bricks'' such that all the lengths of these bricks form $\lambda$ (here each brick contains one or more consecutive boxes). The set of all brick tabloids with shape $\mu$ and content $\lambda$ are denoted by $B_{\lambda,\mu}$, and by definition $B_{\lambda,\mu} \neq \varnothing$ if and only if $\lambda \vdash \mu$. For example, the four brick tabloids with shape $(4,2)$ and content $(2,2,1,1)$ are presented in Figure \ref{fig-brick-tab}.
\begin{figure}[htbp]
  \centering
\begin{tikzpicture}[scale = 1.5]
\draw[rounded corners=5pt]  (-4,4) rectangle (-3,3.5);
\draw[rounded corners=5pt]  (-5,4) rectangle (-4,3.5);
\draw[rounded corners=5pt]  (-5,4.5) rectangle (-4.5,4);
\draw[rounded corners=5pt]  (-4.5,4.5) rectangle (-4,4);
\draw[rounded corners=5pt]  (-2.5,4.5) rectangle (-1.5,4) node (v1) {};
\draw[rounded corners=5pt]  (-2.5,4) rectangle (-2,3.5);
\draw[rounded corners=5pt]  (-2,4) rectangle (-1.5,3.5);
\draw[rounded corners=5pt]  (v1) rectangle (-0.5,3.5);
\draw[rounded corners=5pt]  (0,4.5) rectangle (1,4);
\draw[rounded corners=5pt]  (0,4) rectangle (0.5,3.5);
\draw[rounded corners=5pt]  (0.5,4) rectangle (1.5,3.5);
\draw[rounded corners=5pt]  (1.5,4) rectangle (2,3.5);
\draw [rounded corners=5pt] (2.5,4.5) rectangle (3.5,4) node (v2) {};
\draw[rounded corners=5pt]  (2.5,4) rectangle (3.5,3.5);
\draw[rounded corners=5pt]  (v2) rectangle (4,3.5);
\draw[rounded corners=5pt]  (4,4) rectangle (4.5,3.5);
\end{tikzpicture}
  \caption{The brick tabloids in $B_{(2,2,1,1),(4,2)}$}\label{fig-brick-tab}
\end{figure}
Given a brick tabloid $T$, the \textit{weight} $w(T)$ is defined to be the product of the lengths of bricks ending each row of $T$ and $w(B_{\lambda,\mu}) = \sum_{T \in B_{\lambda,\mu}} w(T)$. For instance, the weight of the four brick tabloids in Figure \ref{fig-brick-tab} are 2, 4, 2, 2, respectively, and hence $w(B_{(2,2,1,1),(4,2)}) = 10$.

Moreover, an \textit{ordered brick tabloid} of shape $\mu$ and content $\lambda$ is a brick tabloid in $B_{\lambda,\mu}$ such that the bricks corresponding to $\lambda_1,\ldots,\lambda_{\ell}$ are labeled with $1,\ldots,\ell$ and these labels decrease in each row. The number of all ordered brick tabloids of shape $\mu$ and content $\lambda$ is denoted by $OB_{\lambda,\mu}$. The three ordered brick tabloids with shape $(4,2)$ and content $(2,2,1,1)$ are presented in Figure \ref{fig-obt}.
\begin{figure}[htbp]
  \centering
\begin{tikzpicture}[scale = 1.5]
\draw[rounded corners=5pt]  (-4,4) rectangle (-3,3.5);
\draw[rounded corners=5pt]  (-5,4) rectangle (-4,3.5);
\draw[rounded corners=5pt]  (-5,4.5) rectangle (-4.5,4);
\draw[rounded corners=5pt]  (-4.5,4.5) rectangle (-4,4);
\draw[rounded corners=5pt]  (-2.5,4.5) rectangle (-1.5,4) node (v1) {};
\draw[rounded corners=5pt]  (-2.5,4) rectangle (-2,3.5);
\draw[rounded corners=5pt]  (-2,4) rectangle (-1.5,3.5);
\draw[rounded corners=5pt]  (v1) rectangle (-0.5,3.5);
\draw[rounded corners=5pt]  (0,4.5) rectangle (1,4) node (v2) {};
\draw[rounded corners=5pt]  (0,4) rectangle (0.5,3.5);
\draw[rounded corners=5pt]  (0.5,4) rectangle (1,3.5);
\draw[rounded corners=5pt]  (v2) rectangle (2,3.5);
\node at (-4.75,4.25) {$4$};
\node at (-4.25,4.25) {$3$};
\node at (-4.25,3.75) {$2$};
\node at (-3.25,3.75) {$1$};
\node at (-2.25,3.75) {$4$};
\node at (-1.75,3.75) {$3$};
\node at (-0.75,3.75) {$2$};
\node at (-1.75,4.25) {$1$};
\node at (0.25,3.75) {$4$};
\node at (0.75,3.75) {$3$};
\node at (0.75,4.25) {$2$};
\node at (1.75,3.75) {$1$};
\end{tikzpicture}
  \caption{The ordered brick tabloids of shape (4,2) and content (2,2,1,1) }\label{fig-obt}
\end{figure}

Then the transition formulas between $e_{\lambda}$'s and $p_{\lambda}$'s are presented as follows:
\begin{align}
  e_{\mu} & =\sum_{\lambda \vdash \mu} (-1)^{n-\ell(\lambda)} \frac{OB_{\lambda,\mu}}{z_{\lambda}}p_{\lambda}, \label{eq-ep}\\
  p_{\mu} & =\sum_{\lambda \vdash \mu} (-1)^{n-\ell(\lambda)} w(B_{\lambda,\mu})e_{\lambda},\label{eq-pe}
\end{align}
where $z_{\lambda} = 1^{r_1}r_1!2^{r_2}r_2!\cdots$ if $\lambda$ has $r_1$ 1's, $r_2$ 2's, \ldots , etc.

Now we turn to the expansions of chromatic symmetric functions in terms of these two bases, for which Stanley proved the following results. Recall that a connected partition is a set partition $\{B_1,B_2,\ldots,B_k\}$ of $V(G)$ such that the induced subgraph of $G$ on $B_i$ is connected for any $1 \le i \le k$, and an acyclic orientation is an orientation of the edges of $G$ (also called arcs) such that there is no directed cycles.
\begin{thm}[\cite{Sta95}]\label{thm-exp-pe}
Let $G$ be a graph with vertex set $V$ and edge set $E$. Then
\[
X_G = \sum_{S \subseteq E} (-1)^{|S|} p_{\lambda(S)},
\]
where $\lambda(S)$ denotes the type of the connected partition induced by $S$, i.e., the parts of $\lambda(S)$ are the vertex sizes of the connected components of the spanning subgraph of $G$ with edge set $S$.

In particular, if $G$ is a tree, then
\[
X_G = \sum_{\lambda \vdash n} (-1)^{n-\ell(\lambda)} b_{\lambda}p_{\lambda},
\]
where $b_{\lambda}$ denotes the number of connected partitions of $G$ with type $\lambda$.

If $X_G = \sum_{\lambda \vdash n} c_{\lambda}e_{\lambda}$, then
\[
\mathrm{sink}(G,j) = \sum_{\substack{\lambda \vdash n \\ \ell(\lambda)=j}} c_{\lambda},
\]
where $\mathrm{sink}(G,j)$ denotes the number of acyclic orientations of $G$ with $j$ sinks (vertices with no arcs pointing out).
\end{thm}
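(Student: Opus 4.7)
The plan is to establish the three assertions in sequence: the first by inclusion-exclusion over the edge set, the second by specializing that formula to trees, and the third by applying a well-chosen algebra homomorphism and then recognizing Stanley's acyclic orientation theorem inside the resulting sum.

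For the $p$-basis expansion, I would start from $X_G = \sum_{\kappa} \prod_v x_{\kappa(v)}$ and rewrite the propriety condition as $\prod_{uv \in E}(1 - [\kappa(u)=\kappa(v)])$. Expanding this product over edges turns $X_G$ into $\sum_{S \subseteq E}(-1)^{|S|}$ multiplied by the sum over all colorings that are monochromatic on every edge of $S$; such colorings are precisely those constant on each connected component of the spanning subgraph $(V,S)$, so the inner sum factors as $\prod_j \sum_c x_c^{\mu_j} = p_{\lambda(S)}$. For the tree refinement, I would then use that when $G$ is a tree every edge subset $S$ yields a spanning subgraph whose components are themselves subtrees, so $S \mapsto \{\text{components of }(V,S)\}$ is a bijection between $2^E$ and the set of connected partitions of $G$; since a subtree on $m$ vertices uses exactly $m-1$ edges, $|S| = n - \ell(\lambda(S))$, and regrouping by partition type yields $X_G = \sum_{\lambda \vdash n}(-1)^{n-\ell(\lambda)} b_\lambda p_\lambda$.

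For the sink interpretation, I would apply the algebra homomorphism $\phi : \Lambda_{\mathbb{Q}}(\mathbf{x}) \to \mathbb{Q}[t]$ determined by $\phi(e_k) = t$ for all $k \geq 1$; since $\phi(e_\lambda) = t^{\ell(\lambda)}$, the $e$-expansion of $X_G$ becomes $\phi(X_G) = \sum_j t^j \sum_{\ell(\lambda)=j} c_\lambda$, reducing the task to proving $\phi(X_G) = \sum_j t^j \mathrm{sink}(G,j)$. Newton's identities yield $\phi(p_k) = (-1)^{k+1}(1-(1-t)^k)$; substituting into the $p$-expansion, factoring each product $\prod_i(1-(1-t)^{\mu_i(S)})$ over subsets of components, using the cancellation $\sum_{S_1 \subseteq E(W)}(-1)^{|S_1|}=0$ whenever $W$ spans any edge of $G$, and applying the Whitney-rank identity $\chi_H(-1) = (-1)^{|V(H)|} a(H)$ (Stanley's acyclic orientation theorem) collapses the result to
\begin{equation*}
\phi(X_G) = \sum_{W \text{ independent in } G}(t-1)^{|W|}\, a(G|_{V \setminus W}),
\end{equation*}
where $a(H)$ denotes the number of acyclic orientations of $H$. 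An inclusion-exclusion on sink sets, using that an acyclic orientation has sinks containing a given independent $W$ iff all crossing edges are directed into $W$ (giving a count of $a(G|_{V \setminus W})$), together with the identity $\sum_{W \subseteq U}(t-1)^{|W|} = t^{|U|}$, then transforms the right-hand side into $\sum_j t^j \mathrm{sink}(G,j)$.

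The main obstacle is the third assertion: the first two are essentially unwinding the definition, but the sink identification requires identifying the correct specialization $\phi$, computing it on the $p$-basis via Newton's identities, and uncovering Stanley's acyclic orientation theorem inside the resulting alternating sum. The delicate sign-and-cancellation bookkeeping that isolates the independent vertex subsets is where most care is needed, though each step is mechanical once the specialization has been chosen.
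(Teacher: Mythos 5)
Your proposal is correct, but note that the paper does not actually prove this statement at all: it is quoted verbatim from Stanley's 1995 paper (combining his subset expansion of $X_G$ in the power sum basis, its specialization to trees, and his sink theorem), so there is no in-paper argument to compare against. Your three steps all check out. The inclusion--exclusion over edge subsets and the tree specialization (edge subsets of a tree biject with connected partitions, with $|S|=n-\ell(\lambda(S))$) are exactly the standard arguments. For the sink statement, the specialization $\phi(e_k)=t$ is well defined since the $e_k$ generate $\Lambda_{\mathbb{Q}}(\mathbf{x})$ freely, and your value $\phi(p_k)=(-1)^{k+1}\bigl(1-(1-t)^k\bigr)$ is right (e.g.\ from $\sum_{k\ge1}(-1)^{k-1}p_ku^{k-1}=E'(u)/E(u)$ with $\phi(E(u))=\frac{1-(1-t)u}{1-u}$). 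I verified the collapse you describe: expanding $\prod_i\bigl(1-(1-t)^{\mu_i(S)}\bigr)$ over subsets of components, the sign $(-1)^{|A|}$ cancels against the component-count sign coming from $\phi(p_{\mu_i})$, the sum over edges inside the chosen vertex set $W$ vanishes unless $W$ is independent, and Whitney's subset expansion together with $\chi_H(-1)=(-1)^{|V(H)|}a(H)$ yields $\phi(X_G)=\sum_{W\ \mathrm{independent}}(t-1)^{|W|}a(G[V\setminus W])$; the final exchange of summation over acyclic orientations, using that sink sets are independent and $\sum_{W\subseteq U}(t-1)^{|W|}=t^{|U|}$, gives $\sum_j \mathrm{sink}(G,j)t^j$ and hence the claim by comparing coefficients with $\phi(e_\lambda)=t^{\ell(\lambda)}$. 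Two minor points: the evaluation $\chi_H(-1)=(-1)^{|V(H)|}a(H)$ is Stanley's 1973 acyclic orientation theorem rather than a ``Whitney-rank identity'' (Whitney's theorem is the subset expansion you use alongside it), and your argument is therefore not fully self-contained, though citing that classical fact is entirely reasonable here.
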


Originally, Wolfgang proved Theorem \ref{thm-epos-cp} by means of Hopf algebra \cite{Wol97}, which build a qualitative relation between $e$-positivity and connected partitions. Now we are able to present a simple proof in the special case of trees.
\begin{prop}[\cite{Wol97}]\label{prop-tree-epos-cp}
If a tree $T$ on $n$ vertices is $e$-positive, then it contains a connected partition of type $\lambda$ for any integer partition $\lambda$ of $n$.
\end{prop}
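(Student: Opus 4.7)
The plan is to invert the $p$-basis expansion of $X_T$ given by Theorem \ref{thm-exp-pe} using the $e$-to-$p$ transition (\ref{eq-ep}), thereby writing each $b_\mu$ explicitly as a non-negative rational combination of the $e$-coefficients $c_\lambda$ appearing in $X_T = \sum_\lambda c_\lambda e_\lambda$. Substituting (\ref{eq-ep}) into this $e$-expansion, regrouping the result by $p_\mu$, and matching against the tree identity $X_T = \sum_{\mu \vdash n}(-1)^{n-\ell(\mu)} b_\mu p_\mu$ from Theorem \ref{thm-exp-pe} (then invoking linear independence of the $p_\mu$'s), I expect to land on the key identity
\[
b_\mu \ =\ \sum_{\lambda:\, \mu \vdash \lambda} \frac{OB_{\mu,\lambda}}{z_\mu}\, c_\lambda,
\]
valid for every $\mu \vdash n$ and entirely independent of any positivity hypothesis.

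With this formula in hand, the proof reduces to exhibiting one strictly positive summand on the right-hand side under the $e$-positivity assumption. The natural candidate is $\lambda = (n)$, which is refined by every $\mu \vdash n$. Two clean observations handle it. First, $OB_{\mu,(n)} = 1$: in a single row of length $n$, the requirement that the labels $1,\dots,\ell(\mu)$ decrease pins down the order of the $\ell(\mu)$ bricks uniquely. Second, by the sink formula in Theorem \ref{thm-exp-pe}, $c_{(n)} = \mathrm{sink}(T,1)$, since $(n)$ is the only partition of length one; for a tree, any acyclic orientation with a unique sink $v$ is forced to orient every edge toward $v$ along the unique tree path, yielding exactly $n$ such orientations (one per choice of $v$). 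Hence $c_{(n)} = n$.

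Assembling the pieces, the $e$-positivity hypothesis renders every summand in the $b_\mu$ identity non-negative, so
\[
b_\mu \ \ge\ \frac{OB_{\mu,(n)}\, c_{(n)}}{z_\mu} \ =\ \frac{n}{z_\mu} \ >\ 0,
\]
which is exactly the existence of a connected partition of type $\mu$. The argument is short, and the only delicate point I foresee is guaranteeing a strictly positive summand in the $b_\mu$ expression; the identification $c_{(n)} = n$ does this, and it is precisely where the tree hypothesis enters, so one should not expect the same proof to extend verbatim to arbitrary connected graphs, for which Wolfgang's Hopf-algebraic argument underlying Theorem \ref{thm-epos-cp} appears to be required.
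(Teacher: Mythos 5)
Your proof is correct and follows essentially the same route as the paper: substituting \eqref{eq-ep} into the $e$-expansion, matching $p$-coefficients against Theorem \ref{thm-exp-pe} to get $b_\mu = \sum_{\mu \vdash \lambda} c_\lambda\, OB_{\mu,\lambda}/z_\mu$, and isolating the $\lambda=(n)$ term with $c_{(n)}=\mathrm{sink}(T,1)=n$. Your observation $OB_{\mu,(n)}=1$ is just the paper's use of the classical expansion $e_n=\sum_\lambda (-1)^{n-\ell(\lambda)} z_\lambda^{-1} p_\lambda$, so there is no substantive difference.
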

\begin{proof}
Assume that $X_T = \sum_{\mu \vdash n} c_{\mu}e_{\mu}$ with $c_{\mu} \ge 0$ for any $\mu$. Then by \eqref{eq-ep} we have
\begin{align*}
X_T =& \sum_{\mu \vdash n} c_{\mu}\sum_{\lambda \vdash \mu} (-1)^{n-\ell(\lambda)} \frac{OB_{\lambda,\mu}}{z_{\lambda}}p_{\lambda} \\
=& \sum_{\lambda \vdash n} (-1)^{n-\ell(\lambda)}\left(\sum_{\mu \dashv \lambda}c_{\mu} \frac{OB_{\lambda,\mu}}{z_{\lambda}}\right)p_{\lambda},
\end{align*}
where all the terms in the internal summation have the same sign since $c_{\mu} \ge 0$. Taking $j=1$ in Theorem \ref{thm-exp-pe}, we have $[e_n]X_T = n$ since in any tree there is exactly one path connecting two vertices. Moreover, it is well-known that
\[
e_n = \sum_{\lambda \vdash n} \frac{(-1)^{n-\ell(\lambda)}}{z_{\lambda}} p_{\lambda},
\]
which can also be derived by \eqref{eq-ep}. Hence by Theorem \ref{thm-exp-pe} again we have
\[
b_{\lambda} = \sum_{\mu \dashv \lambda}c_{\mu} \frac{OB_{\lambda,\mu}}{z_{\lambda}} \ge c_{n}\frac{OB_{\lambda,n}}{z_{\lambda}} = \frac{n}{z_{\lambda}} > 0,
\]
and this completes the proof since $b_{\lambda}$ must be an integer.
\end{proof}

Now we derive the explicit formula for $e$-coefficients of all trees in terms of connected partitions.
\begin{thm}\label{thm-e-coe}
  For any tree $T$ the coefficient of $e_{\lambda}$ in $X_T$ is given by
\[
[e_{\lambda}]X_T = \sum_{\mu \dashv \lambda} (-1)^{\ell(\lambda)-\ell(\mu)} b_{\mu}w(B_{\lambda,\mu}).
\]
\end{thm}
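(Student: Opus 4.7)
The plan is to derive this identity by a direct substitution: combine the power-sum expansion of $X_T$ for trees given in Theorem \ref{thm-exp-pe} with the transition formula \eqref{eq-pe} expressing each $p_{\mu}$ in the elementary basis, and then read off the coefficient of $e_{\lambda}$ after swapping the order of summation.

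More precisely, I will first write
\[
X_T \;=\; \sum_{\mu \vdash n} (-1)^{n-\ell(\mu)}\, b_{\mu}\, p_{\mu}
\]
from the tree case of Theorem \ref{thm-exp-pe}, and then substitute
\[
p_{\mu} \;=\; \sum_{\lambda \vdash \mu} (-1)^{n-\ell(\lambda)}\, w(B_{\lambda,\mu})\, e_{\lambda}
\]
from \eqref{eq-pe}. Interchanging the two sums and collecting the coefficient of a fixed $e_{\lambda}$, the inner index set becomes the set of $\mu$ with $B_{\lambda,\mu}\ne\varnothing$, which by the definition of brick tabloids is exactly $\{\mu : \lambda \vdash \mu\} = \{\mu : \mu \dashv \lambda\}$. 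Thus
\[
[e_{\lambda}]X_T \;=\; \sum_{\mu \dashv \lambda} (-1)^{(n-\ell(\mu))+(n-\ell(\lambda))}\, b_{\mu}\, w(B_{\lambda,\mu}).
\]

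The only bookkeeping step is to simplify the sign. Since $2n$ is even and $-\ell(\mu)\equiv \ell(\mu) \pmod{2}$, the exponent $(n-\ell(\mu))+(n-\ell(\lambda))$ has the same parity as $\ell(\lambda)-\ell(\mu)$, which yields the claimed formula. I do not expect any genuine obstacle here: both ingredients are stated in the excerpt and the argument is a one-line substitution followed by sign reconciliation; the main care is simply to track that $\lambda \vdash \mu$ in the brick-tabloid expansion of $p_{\mu}$ matches $\mu \dashv \lambda$ in the final sum.
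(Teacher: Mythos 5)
Your proposal is correct and follows essentially the same route as the paper: substitute the tree case of Theorem \ref{thm-exp-pe} into the transition formula \eqref{eq-pe}, extract the coefficient of $e_{\lambda}$ over the partitions $\mu$ with $\lambda \vdash \mu$, and note that $(n-\ell(\mu))+(n-\ell(\lambda))$ has the same parity as $\ell(\lambda)-\ell(\mu)$. The sign reconciliation and the identification of the index set via nonemptiness of $B_{\lambda,\mu}$ are exactly the steps the paper performs.
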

\begin{proof}
By Theorem \ref{thm-exp-pe} and \eqref{eq-pe}, we have
\begin{align*}
  [e_{\lambda}]X_T =& [e_{\lambda}] \sum_{\mu \vdash n} (-1)^{n-\ell(\mu)} b_{\mu} p_{\mu} \\
  =& [e_{\lambda}] \sum_{\mu \vdash n} (-1)^{n-\ell(\mu)} b_{\mu} \sum_{\nu \vdash \mu} (-1)^{n-\ell(\nu)} w(B_{\nu,\mu})e_{\nu} \\
  =& \sum_{\mu \dashv \lambda} (-1)^{\ell(\lambda)-\ell(\mu)} b_{\mu}w(B_{\lambda,\mu}),
\end{align*}
as desired.
\end{proof}
\begin{rem}
By a result of Orellana and Scott \cite{OS14}, Zheng \cite{Zhe22} proved a formula reducing the chromatic symmetric functions of spiders to that of paths. Hence the $e$-coefficients of spiders can be calculated from the $e$-coefficients of paths, and the latter appeared implicitly in Stanley \cite{Sta95} and explicitly in Wolfe \cite{Wol98}. However, for trees with more irregular structures, reducing its chromatic symmetric function to that of paths would result in a messy expression.
\end{rem}

Using Theorem \ref{thm-e-coe}, we naturally obtain the following equivalent condition for $e$-positivity of trees.
\begin{thm}\label{thm-epos-equiv}
Let $T$ be a tree on $n$ vertices. Then $T$ is $e$-positive if and only if
\[
b_{\lambda} \ge \left\lceil \frac{1}{\prod_{i=1}^{\ell(\lambda)} \lambda_i} \sum_{\substack{\mu \dashv \lambda \\ \ell(\mu) < \ell(\lambda)}} (-1)^{\ell(\lambda)-\ell(\mu)-1}b_{\mu}w(B_{\lambda,\mu})\right\rceil
\]
for any partition $\lambda$ of $n$.
\end{thm}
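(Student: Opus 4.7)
The plan is to deduce the theorem directly from Theorem \ref{thm-e-coe} by isolating the ``diagonal'' term $\mu=\lambda$ in the expansion of $[e_\lambda]X_T$ and then invoking integrality of $b_\lambda$.

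First I would recall that if $\mu \dashv \lambda$ (i.e.\ $\lambda$ refines $\mu$), then $\ell(\lambda) \ge \ell(\mu)$, with equality occurring precisely when $\mu=\lambda$. Hence the condition ``$\mu \dashv \lambda$ and $\ell(\mu) < \ell(\lambda)$'' in the theorem is just ``$\mu \dashv \lambda$ and $\mu \neq \lambda$,'' so I can separate the $\mu=\lambda$ term of the formula in Theorem \ref{thm-e-coe} from the rest.

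Next I would evaluate $w(B_{\lambda,\lambda})$. A brick tabloid of shape $\lambda$ with content $\lambda$ must fill each row of length $\lambda_i$ with a single brick of size $\lambda_i$, so $B_{\lambda,\lambda}$ has exactly one element, whose weight (the product of the lengths of the bricks ending each row) is $\prod_{i=1}^{\ell(\lambda)} \lambda_i$. Substituting into Theorem \ref{thm-e-coe} gives
\[
[e_{\lambda}]X_T \;=\; b_{\lambda}\prod_{i=1}^{\ell(\lambda)} \lambda_i \;-\; \sum_{\substack{\mu \dashv \lambda \\ \ell(\mu) < \ell(\lambda)}} (-1)^{\ell(\lambda)-\ell(\mu)-1} b_{\mu}\, w(B_{\lambda,\mu}).
\]

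Finally, $T$ is $e$-positive iff $[e_{\lambda}]X_T \ge 0$ for every $\lambda \vdash n$, which after rearrangement and division by the positive integer $\prod_i \lambda_i$ reads
\[
b_{\lambda} \;\ge\; \frac{1}{\prod_{i=1}^{\ell(\lambda)} \lambda_i} \sum_{\substack{\mu \dashv \lambda \\ \ell(\mu) < \ell(\lambda)}} (-1)^{\ell(\lambda)-\ell(\mu)-1} b_{\mu}\, w(B_{\lambda,\mu}).
\]
Since $b_\lambda$ is an integer, this is equivalent to the stated inequality with the ceiling. No real obstacle is expected: the content is a bookkeeping manipulation, and the only potentially subtle point is confirming that $B_{\lambda,\lambda}$ contributes precisely $\prod_i \lambda_i$ to the diagonal coefficient of $b_\lambda$, which follows directly from the definition of weight.
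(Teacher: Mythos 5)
Your proposal is correct and follows essentially the same route as the paper: isolate the $\mu=\lambda$ term of Theorem \ref{thm-e-coe}, note that $B_{\lambda,\lambda}$ consists of a single tabloid of weight $\prod_{i=1}^{\ell(\lambda)}\lambda_i$, rearrange the positivity condition $[e_\lambda]X_T \ge 0$, and pass to the ceiling via integrality of $b_\lambda$. Your additional remark that ``$\mu \dashv \lambda$ with $\ell(\mu)<\ell(\lambda)$'' is exactly ``$\mu \dashv \lambda$, $\mu \neq \lambda$'' is a correct justification of a step the paper leaves implicit.
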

\begin{proof}
By Theorem \ref{thm-e-coe}, $[e_{\lambda}]X_T \ge 0$ if and only if
\[
b_{\lambda}w(B_{\lambda,\lambda}) \ge  \sum_{\substack{\mu \dashv \lambda \\ \ell(\mu) < \ell(\lambda)}} (-1)^{\ell(\lambda)-\ell(\mu)-1}b_{\mu}w(B_{\lambda,\mu}).
\]
Moreover, we have $w(B_{\lambda,\lambda})= \prod_{i=1}^{\ell(\lambda)} \lambda_i$ since the unique tabloid with shape and content both being $\lambda$ has exactly one brick of $\lambda_i$ in row $i$ for each $1 \le i \le \ell(\lambda)$. Then the proof follows from the requirement that $b_{\lambda}$ must be an integer.
\end{proof}

Similarly, we have the following result on the numbers of acyclic orientations, which is valid for all trees.
\begin{thm}\label{thm-acyc-ori-all-j}
Let $T$ be a tree on $n$ vertices. Then it has exactly
\[
\mathrm{sink}(G,j) = \sum_{\substack{\lambda \vdash n \\ \ell(\lambda)=j}} \sum_{\mu \dashv \lambda} (-1)^{j-\ell(\mu)} b_{\mu}w(B_{\lambda,\mu})
\]
acyclic orientations of $j$ sinks.
\end{thm}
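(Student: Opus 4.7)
The plan is to combine Theorem \ref{thm-exp-pe} and Theorem \ref{thm-e-coe} in an essentially mechanical way, so I would present this as a short corollary-style proof rather than a long argument. The key point is that Theorem \ref{thm-exp-pe} expresses $\mathrm{sink}(G,j)$ as the sum of the $e$-coefficients $c_{\lambda}$ of $X_T$ over partitions $\lambda \vdash n$ of length exactly $j$, while Theorem \ref{thm-e-coe} provides a closed-form expression for each $c_{\lambda}$ for a tree.

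First I would write down the identity $\mathrm{sink}(T,j) = \sum_{\lambda \vdash n,\ \ell(\lambda)=j} [e_{\lambda}] X_T$ coming from Stanley's formula recalled in Theorem \ref{thm-exp-pe}. Then I would substitute the expression
\[
[e_{\lambda}] X_T \;=\; \sum_{\mu \dashv \lambda} (-1)^{\ell(\lambda)-\ell(\mu)} b_{\mu} w(B_{\lambda,\mu})
\]
from Theorem \ref{thm-e-coe} into this sum, and finally use the hypothesis $\ell(\lambda)=j$ to replace $(-1)^{\ell(\lambda)-\ell(\mu)}$ by $(-1)^{j-\ell(\mu)}$, which gives exactly the claimed formula.

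There is essentially no obstacle here since the statement follows by assembling two results already proved. The only subtlety worth emphasising in a remark is why the restriction to trees is necessary: Theorem \ref{thm-e-coe}, and hence the inner sum in the desired formula, relies on the clean $p$-expansion $X_T = \sum_{\mu \vdash n}(-1)^{n-\ell(\mu)} b_{\mu} p_{\mu}$ from Theorem \ref{thm-exp-pe}, which requires $T$ to be a tree so that the only edge subsets $S$ contributing to a given connected partition are those spanning each block by a tree (in fact, uniquely). Without this, the coefficient of $p_{\mu}$ would involve a more complicated sum over spanning subgraphs, and the simplification $[e_{\lambda}]X_G = \sum_{\mu \dashv \lambda}(-1)^{\ell(\lambda)-\ell(\mu)} b_{\mu} w(B_{\lambda,\mu})$ would not hold.
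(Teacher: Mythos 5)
Your proposal is correct and matches the paper's proof: both simply substitute the formula for $[e_{\lambda}]X_T$ from Theorem \ref{thm-e-coe} into the sink identity of Theorem \ref{thm-exp-pe} restricted to $\ell(\lambda)=j$. Your additional remark on why the tree hypothesis is needed is accurate but not required for the argument.
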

\begin{proof}
By Theorem \ref{thm-e-coe}, the $e$-expansion of $X_T$ is
\[
X_T = \sum_{\lambda \vdash n} \left( \sum_{\mu \dashv \lambda} (-1)^{\ell(\lambda)-\ell(\mu)} b_{\mu}w(B_{\lambda,\mu})\right) e_{\lambda}
\]
Then by Theorem \ref{thm-exp-pe} we have
\[
\mathrm{sink}(G,j) = \sum_{\substack{\lambda \vdash n \\ \ell(\lambda)=j}} \sum_{\mu \dashv \lambda} (-1)^{j-\ell(\mu)} b_{\mu}w(B_{\lambda,\mu}).
\]
This completes the proof.
\end{proof}

\section{Necessary conditions for \texorpdfstring{$e$}{e}-positivity of trees}\label{sec-appli}

In this section we proceed to apply Theorem \ref{thm-epos-equiv} and Theorem \ref{thm-acyc-ori-all-j} to derive necessary conditions for $e$-positivity of trees and deduce the non-$e$-positivity of several classes of trees. For the convenience of computation and illustration, we restrict ourselves to some special cases of these two theorems where the lengths of the partitions are relatively small. Under these restrictions we obtain more precise necessary conditions, and proceed to give more characterizations of $e$-positive trees. These results are stated for all trees and are particularly applicable for trees with $\Delta=3$ or $4$, which provide more evidence for Conjecture \ref{conj-nonepos}.

\begin{thm}\label{thm-n22}
Let $T$ be an $e$-positive tree with $n \ge 9$ vertices. Then
\[
b_{(n-4,2,2)} \ge \left\lceil \frac{b_{(n-2,2)} + b_{(n-4,4)}}{2} \right\rceil.
\]
\end{thm}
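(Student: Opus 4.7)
The plan is to apply Theorem \ref{thm-epos-equiv} directly to $\lambda = (n-4,2,2)$ and then simplify the resulting inequality via an integrality argument. Since $\ell(\lambda)=3$, the sum in the theorem runs over coarsenings $\mu \dashv \lambda$ with $\ell(\mu) < 3$, which are exactly $\mu = (n-2,2)$ (merging $n-4$ with a $2$), $\mu = (n-4,4)$ (merging the two $2$'s), and $\mu = (n)$ (merging all parts).

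Next I would compute each weight $w(B_{\lambda,\mu})$ by a short enumeration of brick tabloids using the available bricks $\{n-4,2,2\}$. For $\mu = (n)$, the three orderings of the single row contribute weights $2,2,n-4$, so $w(B_{\lambda,(n)}) = n$. For $\mu = (n-2,2)$, the row of length $2$ must be filled by one $2$-brick, while the row of length $n-2$ admits two orderings with last bricks $2$ and $n-4$, giving $w(B_{\lambda,(n-2,2)}) = 4 + 2(n-4) = 2n-4$. For $\mu = (n-4,4)$, the assumption $n\ge 9$ forces $n-4\ge 5 > 4$, so the $(n-4)$-brick must occupy the bottom row alone and both $2$-bricks fill the top row, producing a single tabloid of weight $w(B_{\lambda,(n-4,4)}) = 2(n-4)$. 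Coupled with $w(B_{\lambda,\lambda}) = 4(n-4)$ and $b_{(n)}=1$ (as $T$ is connected), Theorem \ref{thm-epos-equiv} reduces to
\begin{equation*}
4(n-4)\, b_{(n-4,2,2)} \;\geq\; (2n-4)\, b_{(n-2,2)} + 2(n-4)\, b_{(n-4,4)} - n.
\end{equation*}

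The final and most delicate step is an integrality argument to convert this into the claimed ceiling bound. Writing $2n-4 = 2(n-4)+4$ and rearranging gives
\begin{equation*}
2(n-4)\, D \;\geq\; 4 b_{(n-2,2)} - n, \qquad\text{where}\qquad D := 2 b_{(n-4,2,2)} - b_{(n-2,2)} - b_{(n-4,4)} \in \mathbb{Z}.
\end{equation*}
If $4 b_{(n-2,2)} \ge n$, then $D \ge 0$ is immediate. Otherwise $b_{(n-2,2)} \ge 0$ gives $D \ge -n/(2(n-4))$, and the hypothesis $n\ge 9$ is exactly what guarantees $n/(2(n-4)) < 1$, so $D > -1$ and integrality upgrades this to $D \ge 0$. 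Since $b_{(n-4,2,2)} \in \mathbb{Z}$, the inequality $2 b_{(n-4,2,2)} \ge b_{(n-2,2)} + b_{(n-4,4)}$ is equivalent to the desired ceiling bound, finishing the proof. The main obstacle is isolating and bounding the correction term $(4 b_{(n-2,2)} - n)/(2(n-4))$ tightly enough for integrality to close the gap; the cutoff $n\ge 9$ is precisely what makes this work.
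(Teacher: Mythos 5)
Your proposal is correct and follows essentially the same route as the paper: apply Theorem \ref{thm-epos-equiv} to $\lambda=(n-4,2,2)$, compute the same three weights $w(B_{\lambda,(n)})=n$, $w(B_{\lambda,(n-2,2)})=2n-4$, $w(B_{\lambda,(n-4,4)})=2(n-4)$ (with $n\ge 9$ forcing uniqueness in the last case), and close with integrality. The only cosmetic difference is at the end: the paper bounds the error term via $b_{(n-2,2)}\ge 1$ from Proposition \ref{prop-tree-epos-cp}, whereas you apply integrality to the combination $D=2b_{(n-4,2,2)}-b_{(n-2,2)}-b_{(n-4,4)}$ and only need $b_{(n-2,2)}\ge 0$, a slightly leaner finish of the same argument.
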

\begin{proof}
By Theorem \ref{thm-epos-equiv}, we have
\begin{align*}
b_{(n-4,2,2)} \ge &
\frac{b_{(n-2,2)}w(B_{(n-4,2,2),(n-2,2)})+b_{(n-4,4)}w(B_{(n-4,2,2),(n-4,4)}) - b_{n}w(B_{(n-4,2,2),n})}{(n-4) \cdot 2 \cdot 2}
\\
= & \frac{(2(n-4)+2\cdot2)b_{(n-2,2)} + 2(n-4)b_{(n-4,4)} - ((n-4)+2+2)b_{n}}{4(n-4)} \\
= & \frac{2(n-2)b_{(n-2,2)} + 2(n-4)b_{(n-4,4)} - n}{4(n-4)} \\
= & \frac{2(n-4)b_{(n-2,2)} + 2(n-4)b_{(n-4,4)} - (n-4b_{(n-2,2)})}{4(n-4)} \\
\ge & \frac{b_{(n-2,2)} + b_{(n-4,4)}}{2} - \frac{1}{4},
\end{align*}
where $w(B_{(n-4,2,2),(n-4,4)}) = 2(n-4)$ since when $n \ge 9$ there is a unique tabloid in $B_{(n-4,2,2),(n-4,4)}$, and in the last step we use $b_{(n-2,2)} \ge 1$ deduced from Proposition \ref{prop-tree-epos-cp}. Hence we have
\[
b_{(n-4,2,2)} \ge \left\lceil \frac{b_{(n-2,2)} + b_{(n-4,4)}}{2} \right\rceil
\]
since $b_{(n-4,2,2)}$ must be an integer.
\end{proof}

This simple condition allows us to give some characterizations of the local structure of $e$-positive trees. From now on we say that $T$ has a \textit{pendent (graph)} $G$ if there is a cut edge $f$ such that $T-f$ has two connected components with one component being isomorphic to $G$.

\begin{thm}\label{thm-one-p2}
If a tree $T$ satisfies one of the following conditions, then it is not $e$-positive:
\begin{itemize}
  \item[(1)] $T$ has no pendent $P_2$;
  \item[(2)] $T$ has exactly one pendent $P_2$ and at least one pendent claw $K_{1,3}$;
  \item[(3)] $T$ has exactly two pendent $P_2$'s and no pendent $P_4$. 
\end{itemize}
\end{thm}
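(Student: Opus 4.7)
The plan is to combinatorially interpret the three connected-partition counts appearing in Theorem \ref{thm-n22} in terms of pendent subgraphs of $T$, then combine Theorem \ref{thm-n22} with Proposition \ref{prop-tree-epos-cp} in each case. Let $p_2$, $p_4$, $k$ denote the numbers of pendent $P_2$, pendent $P_4$, and pendent $K_{1,3}$ of $T$, respectively. I claim
\[
b_{(n-2,2)} = p_2, \qquad b_{(n-4,4)} = p_4 + k, \qquad b_{(n-4,2,2)} = \binom{p_2}{2} + p_4.
\]
The first two identities follow directly from the definition: in a tree, a block of size $2$ (resp.\ $4$) in a connected partition whose complement is also connected must be a pendent subtree on $2$ (resp.\ $4$) vertices, and the only $4$-vertex subtrees are $P_4$ and $K_{1,3}$. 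For the third identity, the contracted graph on the three blocks is a tree on three vertices, hence a path, yielding two cases: (a) the $(n-4)$-block is in the middle, in which case the two $2$-blocks are independent pendent $P_2$'s, contributing $\binom{p_2}{2}$ because distinct pendent $P_2$'s are automatically vertex-disjoint (the degree-$2$ vertex of a pendent $P_2$ has a unique non-leaf neighbor); (b) a $2$-block is in the middle, in which case the four vertices of the two $2$-blocks induce a connected subgraph with two disjoint internal edges plus one bridge edge, hence a $P_4$ (the maximum degree among these four vertices is $2$, ruling out $K_{1,3}$), and this $P_4$ must be pendent in $T$, with a unique decomposition into its two end-pairs, contributing $p_4$.

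Part (1) is now immediate: $p_2 = 0$ forces $b_{(n-2,2)} = 0$, contradicting Proposition \ref{prop-tree-epos-cp}. For part (3), with $p_2 = 2$ and $p_4 = 0$, Proposition \ref{prop-tree-epos-cp} gives $b_{(n-4,4)} = k \ge 1$, while Theorem \ref{thm-n22} and the interpretations force $1 = b_{(n-4,2,2)} \ge \lceil (2+k)/2 \rceil = 1 + \lceil k/2 \rceil$, hence $k \le 0$, a contradiction.

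Part (2) requires the additional bound $p_4 \le 1$ when $p_2 = 1$ and $n \ge 9$. Every pendent $P_4$ contains a pendent $P_2$ at its leaf end, so all pendent $P_4$'s must extend the unique pendent $P_2$ $\{a,b\}$ (where $a$ is a leaf, $b$ has degree $2$, and $c$ is the other neighbor of $b$) as $\{a,b,c,d\}$ for some $d$. A boundary-edge count forces $d_T(c) + d_T(d) = 4$, giving either $d_T(c) = d_T(d) = 2$ (which pins down $d$ as $c$'s other neighbor) or $d_T(c) = 3$ with $d$ a leaf-neighbor of $c$ distinct from $b$; the latter option admits two choices of $d$ only when both non-$b$ neighbors of $c$ are leaves, which would force $n = 5$ and so is excluded by hypothesis. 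Assuming $T$ is $e$-positive, Theorem \ref{thm-n22} then yields $p_4 = b_{(n-4,2,2)} \ge \lceil (1 + p_4 + k)/2 \rceil$, i.e., $p_4 \ge 1 + k \ge 2$, contradicting $p_4 \le 1$. The main obstacle I anticipate is justifying the combinatorial interpretation of $b_{(n-4,2,2)}$ in sub-case (b), specifically ruling out $K_{1,3}$ as the possible shape of the four vertices and verifying that each pendent $P_4$ produces exactly one valid split.
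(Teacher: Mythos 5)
Your proposal is correct and follows essentially the same route as the paper: both arguments combine Proposition \ref{prop-tree-epos-cp} with Theorem \ref{thm-n22} after interpreting $b_{(n-2,2)}$, $b_{(n-4,4)}$ and $b_{(n-4,2,2)}$ via pendent $P_2$'s, $P_4$'s and claws; you merely package these counts as closed formulas ($b_{(n-4,2,2)}=\binom{p_2}{2}+p_4$, etc.) and, in case (2), prove the bound $p_4\le 1$ directly (in fact more carefully than the paper's one-line justification) instead of splitting into the subcases $p_4=0$ and $p_4=1$. The implicit restriction to $n\ge 9$ needed for Theorem \ref{thm-n22} is shared with the paper's own proof, so it is not a gap specific to your argument.
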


\begin{proof}
Assume that $T$ has $n$ vertices. Note that the number of pendent $P_2$ in a tree is exactly the number of connected partitions of type $(n-2,2)$. Hence in case (1) $T$ is missing a connected partition of type $(n-2,2)$ and is not $e$-positive.

In case (2), we have $b_{(n-2,2)}=1$. Moreover, $T$ has at most one pendent $P_4$ since each pendent $P_4$ naturally has one pendent $P_2$. Then we have the following two subcases.
\begin{itemize}
  \item[(2.1)] $T$ has no pendent $P_4$. In this subcase $T$ is missing a connected partition of type $(n-4,2,2)$ and is not $e$-positive. Indeed, in any connected partition of type $(n-4,2,2)$ the unique pendent $P_2$ must form a block of size 2, and then the tree $T'$ obtained by deleting the pendent $P_2$ from $T$ have to admit a connected partition of type $(n-4,2)$. This means that $T'$ has a new pendent $P_2$, and these two $P_2$'s must be joined by some edge in $T$ and form a pendent $P_4$, leading to a contradiction.
  \item[(2.2)] $T$ has exactly one pendent $P_4$. By the argument in Subcase (2.1) it is straightforward to check that $b_{(n-4,2,2)}=1$. Moreover, this pendent $P_4$ and the pendent claw(s) yield $b_{(n-4,4)} \ge 2$, and hence
      \[
      \left\lceil \frac{b_{(n-2,2)} + b_{(n-4,4)}}{2} \right\rceil \ge \left\lceil \frac{3}{2} \right\rceil > 1 = b_{(n-4,2,2)},
      \]
      which implies that $T$ is not $e$-positive by Theorem \ref{thm-n22}.
\end{itemize}

In case (3) we have $b_{(n-2,2)}=2$. By the argument in Subcase (2.1),  the two pendent $P_2$'s cannot be joined by some edge in $T$ since $T$ has no pendent $P_4$. By the same reason they must form the two blocks of size 2 in any connected partition of $T$ with type $(n-4,2,2)$, and hence $b_{(n-4,2,2)}=1$. Finally, if $T$ is missing a connected partition of type $(n-4,4)$ then it is not $e$-positive, and if not we will get
\[
      \left\lceil \frac{b_{(n-2,2)} + b_{(n-4,4)}}{2} \right\rceil \ge \left\lceil \frac{3}{2} \right\rceil > 1 = b_{(n-4,2,2)}.
\]
which shows that $T$ is not $e$-positive by Theorem \ref{thm-n22}.
\end{proof}

\begin{exam}
\textup{We present an illustration of Theorem \ref{thm-one-p2} in Figure \ref{fig-pendent-p2}. The top two correspond to the trees in Subcase (2.2), where the left part is a pendent $P_4$, the right part is a pendent claw, and the subtree $T_1$ can be chosen to any tree as long as it has no pendent $P_2$ not joined to the pendent $P_4$ or claw.}

\textup{There are several different structures for trees in Case (3), and here for convenience we just take one class as an example, which is drawn as the bottom one in Figure \ref{fig-pendent-p2}. The left part of the graph contains the two pendent $P_2$'s and the subtree $T_2$ can be an arbitrary tree as long as it has no pendent $P_2$ not joined to the left part (and hence has no pendent $P_4$ and no other pendent $P_2$). We should remark that the two pendent $P_2$'s may also be joined to different vertices in $T$.}
\end{exam}
\begin{figure}[htbp]
  \centering
\begin{tikzpicture}[scale=1]

\fill (-4,3) circle (0.5ex);
\fill (-2,3) circle (0.5ex);
\draw (-5,3) -- (-4,3) -- (-3,3) -- (-2,3) -- (-1,3);

\fill (-3,3) circle (0.5ex);
\fill (-5,3) circle (0.5ex);

\node (v3) at (0,3) {$T_1$};

\fill (2,3) circle (0.5ex);
\draw (3,2) -- (3,3) -- (1,3);
\fill (3,3) circle (0.5ex);
\draw (3,3) -- (3,3) -- (4,3);
\fill (4,3) circle (0.5ex);
\fill (3,2) circle (0.5ex);

\fill (-4,1) circle (0.5ex);
\fill (-2,1) circle (0.5ex);
\draw (-4,1) -- (-3,1) -- (-2,1) -- (-1,1);
\draw (-2,1) -- (-2,0);

\fill (-3,1) circle (0.5ex);
\fill (-2,0) circle (0.5ex);

\node (v1) at (0,1) {$T_1$};

\fill (2,1) circle (0.5ex);
\draw (3,0) -- (3,1) -- (1,1);
\fill (3,1) circle (0.5ex);
\draw (3,1) -- (3,1) -- (4,1);
\fill (4,1) circle (0.5ex);
\fill (3,0) circle (0.5ex);

\fill (-4,-2) circle (0.5ex);
\fill (-2,-4) circle (0.5ex);
\fill (-2,-2) circle (0.5ex);
\draw (-4,-2) -- (-3,-2) -- (-2,-2) -- (-1,-2);
\draw (-2,-1) -- (-2,-2) -- (-2,-3) -- (-2,-4);

\fill (-3,-2) circle (0.5ex);
\fill (-2,-3) circle (0.5ex);
\fill (-2,-1) circle (0.5ex);

\node (v2) at (1,-2) {$T_2$};

\draw  (v1) ellipse (1 and 0.5);

\draw  (v2) ellipse (2 and 1);

\draw  (v3) ellipse (1 and 0.5);
\end{tikzpicture}
  \caption{Three classes of non-$e$-positive trees}\label{fig-pendent-p2}
\end{figure}

For the second part of this section, we apply Theorem \ref{thm-nece-cond-acyc} to give a necessary condition on the number of acyclic orientations with 2 sinks for $e$-positivity of trees.
\begin{thm}\label{thm-nece-cond-acyc}
Let $T$ be a tree with $n$ vertices and $l$ leaves. If $T$ is $e$-positive, then it has at least
\[
\sum_{k = \lceil \frac{n}{2} \rceil}^{n-2} k(n-k) + \frac{(2l-n)(n-1)}{2}
\]
acyclic orientations with 2 sinks.
\end{thm}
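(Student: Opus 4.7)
The plan is to apply Theorem \ref{thm-acyc-ori-all-j} directly with $j = 2$ and to use the $e$-positivity hypothesis only minimally, via Proposition \ref{prop-tree-epos-cp}.

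First, I would unpack Theorem \ref{thm-acyc-ori-all-j} for $j = 2$. The partitions $\lambda \vdash n$ of length $2$ are precisely $\lambda = (n-k, k)$ for $1 \le k \le \lfloor n/2 \rfloor$, and the partitions $\mu$ with $\mu \dashv \lambda$ reduce to $\lambda$ itself (length $2$) together with $\mu = (n)$ (length $1$). A direct brick-tabloid computation gives $w(B_{(n-k,k),(n-k,k)}) = k(n-k)$, while $w(B_{(n-k,k),(n)})$ equals $n$ for $k < n/2$ (two brick arrangements of weights $k$ and $n-k$) and equals $n/2$ in the tie case $k = n/2$ (a single unordered arrangement). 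Since $b_{(n)} = 1$, the formula in Theorem \ref{thm-acyc-ori-all-j} collapses to
\[
\mathrm{sink}(T, 2) \;=\; \sum_{k=1}^{\lfloor n/2 \rfloor} k(n-k)\, b_{(n-k, k)} \;-\; \frac{n(n-1)}{2},
\]
where the subtraction $n(n-1)/2$ is obtained uniformly in the parity of $n$ after summing the constant contributions.

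Next, I would extract two elementary bounds on the connected-partition counts. The connected partitions of type $(n-1,1)$ in a tree correspond bijectively to its leaves, so $b_{(n-1,1)} = l$ exactly. For each $2 \le k \le \lfloor n/2 \rfloor$, Proposition \ref{prop-tree-epos-cp} applied to the $e$-positive tree $T$ gives $b_{(n-k,k)} \ge 1$. Substituting these into the displayed identity yields
\[
\mathrm{sink}(T, 2) \;\ge\; l(n-1) \,+\, \sum_{k=2}^{\lfloor n/2 \rfloor} k(n-k) \,-\, \frac{n(n-1)}{2}.
\]
Finally, the symmetry $k \mapsto n-k$ rewrites the middle sum as $\sum_{k = \lceil n/2 \rceil}^{n-2} k(n-k)$, and the identity $l(n-1) - n(n-1)/2 = (2l-n)(n-1)/2$ delivers the claimed lower bound.

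The only delicate point in this plan is the bookkeeping in the first step: correctly evaluating $w(B_{(n-k,k),(n)})$ in the boundary case $k = n/2$ and verifying that the constant $n(n-1)/2$ is the same for both parities of $n$. Once that is handled, all remaining steps are immediate substitutions or standard summation identities.
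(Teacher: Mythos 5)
Your proposal is correct and follows essentially the same route as the paper: specialize Theorem \ref{thm-acyc-ori-all-j} to $j=2$, evaluate $w(B_{(n-k,k),(n-k,k)})=k(n-k)$ and $w(B_{(n-k,k),(n)})$ (with the tie case $k=n/2$ handled separately, giving the uniform constant $n(n-1)/2$), then use $b_{(n-1,1)}=l$ and $b_\lambda\ge 1$ from Proposition \ref{prop-tree-epos-cp}. The only cosmetic difference is your parametrization of the length-two partitions by the smaller part, which the final reindexing $k\mapsto n-k$ converts back to the paper's form.
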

\begin{proof}
Taking $j=2$ in Theorem \ref{thm-acyc-ori-all-j}, we have
\begin{align*}
\mathrm{sink}(G,2) =& \sum_{\substack{\lambda \vdash n \\ \ell(\lambda)=2}} \sum_{\mu \dashv \lambda} (-1)^{2-\ell(\mu)} b_{\mu}w(B_{\lambda,\mu}) \\
=& \sum_{k=\lceil\frac{n}{2}\rceil}^{n-1} b_{(k,n-k)} w(B_{(k,n-k),(k,n-k)}) - b_{n} w(B_{(k,n-k),n}) \\
=& \sum_{k=\lceil\frac{n}{2}\rceil}^{n-1} k(n-k)b_{(k,n-k)} - w(B_{(k,n-k),n}),
\end{align*}
and by \eqref{eq-pe}
\[
w(B_{(k,n-k),n})=\begin{cases}
                     k+(n-k)=n, & \mbox{if $n$ is odd or $n$ is even and $k \neq \frac{n}{2}$;} \\
                     k=\frac{n}{2}, & \mbox{if $n$ is even and $k=\frac{n}{2}$}.
                   \end{cases}
\]
Hence
\begin{equation*}
  \mathrm{sink}(T,2) = \begin{cases}
                         \sum_{k=\frac{n+1}{2}}^{n-1} k(n-k)b_{(k,n-k)} - \frac{n-1}{2}n, & \mbox{if $n$ is odd;} \\
                         \sum_{k=\frac{n}{2}}^{n-1} k(n-k)b_{(k,n-k)} - \frac{n-1}{2}n, & \mbox{if $n$ is even},
                       \end{cases}
\end{equation*}
and we also write
\[
\mathrm{sink}(T,2) = \sum_{k=\lceil\frac{n}{2}\rceil}^{n-1} k(n-k)b_{(k,n-k)} - \frac{n(n-1)}{2}.
\]

If $T$ is $e$-positive, then by Propositon \ref{prop-tree-epos-cp} $T$ has a connected partition of every type, which means that $b_{\lambda} \ge 1$ for any $\lambda \vdash n$. Moreover, by definition the coefficient $b_{(n-1,1)}$ is exactly the number of leaves in $T$. Hence we have
\begin{align*}
\mathrm{sink}(T,2) =& \sum_{k=\lceil\frac{n}{2}\rceil}^{n-2} k(n-k)b_{(k,n-k)} + l(n-1) - \frac{n(n-1)}{2} \\
\ge & \sum_{k = \lceil \frac{n}{2} \rceil}^{n-2} k(n-k) + \frac{(2l-n)(n-1)}{2},
\end{align*}
as desired.
\end{proof}

Now we apply this result to caterpillars, for which the number of acyclic orientations can be explicitly calculated. We say that a tree $T$ is a \textit{caterpillar} if removing all leaves of $T$ yields a path $P$ (called the \textit{spine} of $T$). Moreover, if $P$ has $d$ vertices $v_1,\ldots,v_d$ arranged in order and there are $\alpha_i$ leaves adjacent to the $i$-th vertex, then we denote this caterpillar by $C(\alpha) = C(\alpha_1,\alpha_2,\ldots,\alpha_d)$, see Figure \ref{fig-caterpillar} for an example.
\begin{figure}[htbp]
  \centering
\begin{tikzpicture}

\fill (-4,3) circle (0.5ex);
\fill (-2,3) circle (0.5ex);
\draw (-4,2) -- (-4,3) -- (-3,3) -- (-2,3) -- (-1,3) -- (0,3);
\draw (-3,3) -- (-3,2);
\draw (-1,3) -- (-1,2);
\draw (-2.25,2) -- (-2,3) -- (-1.75,2);
\draw (0.25,2) -- (0,3) -- (-0.25,2);

\fill (-3,3) circle (0.5ex);
\fill (-4,2) circle (0.5ex);
\fill (-3,2) circle (0.5ex);
\fill (-2.25,2) circle (0.5ex);
\fill (-1.75,2) circle (0.5ex);
\fill (-1,3) circle (0.5ex);
\fill (0,3) circle (0.5ex);
\fill (-1,2) circle (0.5ex);
\fill (-0.25,2) circle (0.5ex);
\fill (0.25,2) circle (0.5ex);

\end{tikzpicture}
  \caption{The caterpillar $C(1,1,2,1,2)$}\label{fig-caterpillar}
\end{figure}
\begin{thm}\label{thm-acyc-caterpillar-nonepos}
Let $C(\alpha) = C(\alpha_1,\ldots,\alpha_d)$ be a caterpillar. If $C(\alpha)$ is $e$-positive, then
\begin{align*}
& \sum_{i=1}^d \binom{\alpha_i}{2} + \sum_{i=1}^d \alpha_i \sum_{j \neq i} |j-i| + \sum_{1 \le i < j \le d} \alpha_i\alpha_j(j-i+1) + j-i-1  \\
\ge &
\sum_{k = \lceil \frac{|\alpha|+d}{2} \rceil}^{|\alpha|+d-2} k(|\alpha|+d-k) + \frac{(|\alpha|-d)(|\alpha|+d-1)}{2},
\end{align*}
where $|\alpha| = \alpha_1 + \cdots + \alpha_d$.
\end{thm}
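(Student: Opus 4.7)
The plan is to evaluate $\mathrm{sink}(C(\alpha), 2)$ in closed form and then invoke Theorem~\ref{thm-nece-cond-acyc}. A standard caterpillar with $\alpha_1, \alpha_d \ge 1$ has $n = d + |\alpha|$ vertices and $l = |\alpha|$ leaves, so the bound supplied by Theorem~\ref{thm-nece-cond-acyc} matches the right-hand side of the claimed inequality precisely; hence it suffices to prove that the left-hand side of the theorem equals $\mathrm{sink}(C(\alpha), 2)$.

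As a preliminary step I establish the general identity $\mathrm{sink}(T, 2) = W(T) - \binom{n}{2}$ for every tree $T$, where $W(T) = \sum_{\{u, v\}} d_T(u, v)$ denotes the Wiener index. The proof of Theorem~\ref{thm-nece-cond-acyc} shows
\[
\mathrm{sink}(T, 2) = \sum_{k = \lceil n/2 \rceil}^{n-1} k(n-k)\, b_{(k, n-k)} - \binom{n}{2}.
\]
For a tree, each connected partition with two blocks arises from removing a single cut edge $e$, whose deletion splits $T$ into components of sizes $L_e$ and $n - L_e$. Thus $\sum_{k} k(n-k)\, b_{(k, n-k)} = \sum_{e \in E(T)} L_e(n - L_e)$, and the latter equals $W(T)$ by the well-known tree identity: each edge on the unique $u$--$v$ path contributes $1$ to $L_e(n - L_e)$ when counted per unordered pair $\{u, v\}$.

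It remains to compute $W(C(\alpha))$ and compare with the left-hand side of the theorem. I partition vertex pairs into four types and sum the distances: spine--spine $\{v_i, v_j\}$ at distance $|i-j|$; spine--leaf $\{v_i, l_{j, k}\}$ at distance $|i-j| + 1$; same-spine leaf--leaf at distance $2$; and different-spine leaf--leaf $\{l_{i, k}, l_{j, m}\}$ at distance $|i-j| + 2$. The cross terms $\sum_{i, j}\alpha_j|i-j|$ and $\sum_{i<j}(j-i)\alpha_i\alpha_j$ arising from $W(C(\alpha))$ match the analogous terms on the left-hand side of the theorem after noting $\sum_i \alpha_i \sum_{j \ne i}|j-i| = \sum_{i, j}\alpha_j |i-j|$ (the $j = i$ summand vanishes) and $\sum_i \alpha_i(\alpha_i - 1) = 2\sum_i \binom{\alpha_i}{2}$. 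The residual constant contributions $\binom{d}{2}$, $d|\alpha|$, and $\binom{|\alpha|}{2}$ telescope via the Vandermonde identity $\binom{d + |\alpha|}{2} = \binom{d}{2} + d|\alpha| + \binom{|\alpha|}{2}$ to exactly cancel the $-\binom{n}{2}$ term. The main obstacle is the algebraic bookkeeping in this last step; once executed, the desired inequality is immediate from Theorem~\ref{thm-nece-cond-acyc}.
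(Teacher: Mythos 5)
Your proposal is correct, and it reaches the key identity $\mathrm{sink}(C(\alpha),2)=\text{LHS}$ by a genuinely different route than the paper. The paper counts the acyclic orientations with $2$ sinks directly, splitting into four cases according to where the two sinks sit (two leaves at the same spine vertex, leaves at distinct spine vertices, one leaf and one spine vertex, two spine vertices); these cases produce, term by term, the four summands $\sum_i\binom{\alpha_i}{2}$, $\sum_{i<j}\alpha_i\alpha_j(j-i+1)$, $\sum_i\alpha_i\sum_{j\neq i}|j-i|$ and $\sum_{i<j}(j-i-1)$ on the left-hand side. You instead extract from the proof of Theorem \ref{thm-nece-cond-acyc} the intermediate formula $\mathrm{sink}(T,2)=\sum_k k(n-k)b_{(k,n-k)}-\binom{n}{2}$, observe that for a tree every $2$-block connected partition comes from deleting a unique edge (the two connected blocks use $n-2$ edges, so exactly one edge crosses), and hence that the sum equals the Wiener index $W(T)$ via the classical edge-cut decomposition; then you compute $W(C(\alpha))$ by summing distances over the four types of vertex pairs. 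I checked your bookkeeping: $W(C(\alpha))=\sum_{i<j}(j-i)+d|\alpha|+\sum_j\alpha_j\sum_{i\neq j}|i-j|+2\sum_i\binom{\alpha_i}{2}+\sum_{i<j}\alpha_i\alpha_j(j-i+2)$, and subtracting $\binom{n}{2}=\binom{d}{2}+d|\alpha|+\binom{|\alpha|}{2}$ does reproduce exactly the theorem's left-hand side, so the step you flag as "algebraic bookkeeping" goes through. Your convention $\alpha_1,\alpha_d\ge 1$, giving $n=d+|\alpha|$ and $l=|\alpha|$, matches the paper's implicit use of these values. What each approach buys: the paper's case analysis is self-contained and makes each term of the inequality transparent as a sink-placement count, while your argument yields a reusable general identity $\mathrm{sink}(T,2)=W(T)-\binom{n}{2}$ for all trees, reducing the caterpillar computation to the well-studied Wiener index and avoiding the somewhat delicate orientation-by-orientation reasoning.
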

\begin{proof}
By Theorem \ref{thm-nece-cond-acyc} it suffices to prove
\begin{equation}\label{eq-num-acyc-calpha}
\mathrm{sink}(C(\alpha),2) = \sum_{i=1}^d \binom{\alpha_i}{2} + \sum_{i=1}^d \alpha_i \sum_{j \neq i} |j-i| + \sum_{1 \le i < j \le d} \alpha_i\alpha_j(j-i+1) + j-i-1 .
\end{equation}
To see this, we count the acyclic orientations of $C(\alpha)$ according to the following cases. 
\begin{itemize}
  \item[(1)] The two sinks are both leaves adjacent to the same vertex $v_i$ for some $1 \le i \le d$. Then for each two fixed sinks the acyclic orientation is unique and there are $\binom{\alpha_i}{2}$ choices of these two sinks for each $i$.
  \item[(2)] One sink is a leaf adjacent to $v_i$ and the other sink is a leaf adjacent to $v_j$ for some $1\le i < j\le d$. In this case the orientations of all edges except the ones between $v_i$ and $v_j$ on the spine  $P_d$ are fixed. For the edges between $v_i$ and $v_j$, we must select a middle vertex $v_k(i \le k \le j)$ such that the edges between $v_i$ and $v_k$ are oriented towards $v_i$ (and naturally towards $u_1$) and the edges from $v_k$ and $v_j$ are oriented towards $v_j$ (and naturally towards $u_2$). Then for each two fixed sinks the number of such orientations is $j-i+1$, and the number of choices of such sinks is $\alpha_i\alpha_j$ for any $1\le i < j\le d$.
  \item[(3)] One sink is a leaf adjacent to some $v_i$ and the other sink is exactly the vertex $v_j$. Then we have $i \neq j$ since otherwise there will be no acyclic orientations. Similar to Case (2), in this case only the edges between $v_i$ and $v_j$ are not fixed. But the edges $v_{j-1}v_j$ and $v_jv_{j+1}$ must be oriented towards $v_j$ since $v_j$ is a sink. Hence for each two fixed sinks the number of such orientations is $|j-i|$ and the number of choices for such sinks is $\alpha_i$ for any $j \neq i$.
  \item[(4)] The two sinks are just $v_i$ and $v_j$ for some $1 \le i < j \le d$. This case is also similar to Case (2) and Case (3), and the number of such acyclic orientations is $j-i-1$ for any $1 \le i < j \le d$.
\end{itemize}
Adding these terms together, we obtain \eqref{eq-num-acyc-calpha} and the proof follows.
\end{proof}

By this result we can show that many caterpillars are not $e$-positive. Now we give a simple example.
\begin{exam}
\textup{We consider the caterpillar $\alpha = (1,1,2,1,2)$ shown in Figure \ref{fig-caterpillar}, where $d = 5$ and $|\alpha| = 7$. By Theorem \ref{thm-acyc-caterpillar-nonepos} we have $\mathrm{sink}(C(\alpha),2) = 125$. However,
\[
\sum_{k = \lceil \frac{|\alpha|+d}{2} \rceil}^{|\alpha|+d-2} k(|\alpha|+d-k) + \frac{(|\alpha|-d)(|\alpha|+d-1)}{2} = 161 > 125,
\]
leading to a contradiction. Hence $C(1,1,2,1,2)$ is not $e$-positive.}
\end{exam}

\section{More discussions for trees with maximum degree 4}\label{sec-deg4}

In this section we proceed to give more explicit formulas for trees with $\Delta=4$ and having a connected partition of every type, which we will call \textit{CPET} in the remaining part of this article.

With the help of SageMath \cite{sage}, we verified Conjecture \ref{conj-nonepos} for trees with at most 21 vertices. As mentioned before, Zheng \cite{Zhe22} and Tom \cite{Tom26} have verified Conjecture \ref{conj-nonepos} for the trees with $\Delta \ge 5$ and spiders with $\Delta=4$. Therefore, here we only present all the non-spider CPET trees in Figure \ref{fig-CPET}, where $T_1,T_2$ have 17 vertices and $T_3,T_4$ have 19 vertices.
\vspace{0.5cm}
\begin{figure}[htbp]
  \centering
\begin{tikzpicture}[scale=0.9]

\fill (-7,1) circle (0.5ex);
\fill (-6,1) circle (0.5ex);
\fill (-5,1) circle (0.5ex);
\fill (-4,1) circle (0.5ex);
\fill (-3,1) circle (0.5ex);
\fill (-2,1) circle (0.5ex);
\fill (-1,1) circle (0.5ex);
\fill (-1.5,0) circle (0.5ex);
\fill (-0.5,0) circle (0.5ex);
\draw (-7,1) -- (-6,1) -- (-5,1) -- (-4,1) -- (-3,1) -- (-2,1) -- (-1,1) -- (0,1) -- (1,1) -- (2,1) -- (3,1) -- (4,1) -- (5,1) -- (6,1);
\draw (-0.5,0) -- (-1,1) -- (-1.5,0);
\draw (4,1) -- (4,0);
\fill (0,1) circle (0.5ex);
\fill (1,1) circle (0.5ex);
\fill (2,1) circle (0.5ex);
\fill (3,1) circle (0.5ex);
\fill (4,1) circle (0.5ex);
\fill (5,1) circle (0.5ex);
\fill (6,1) circle (0.5ex);
\fill (4,0) circle (0.5ex);

\fill (-7,-1) circle (0.5ex);
\fill (-6,-1) circle (0.5ex);
\fill (-5,-1) circle (0.5ex);
\fill (-4,-1) circle (0.5ex);
\fill (-3,-1) circle (0.5ex);
\fill (-2,-1) circle (0.5ex);
\fill (-1,-1) circle (0.5ex);
\fill (-1.5,-2) circle (0.5ex);
\fill (-0.5,-2) circle (0.5ex);
\draw (-7,-1) -- (-6,-1) -- (-5,-1) -- (-4,-1) -- (-3,-1) -- (-2,-1) -- (-1,-1) -- (0,-1) -- (1,-1) -- (2,-1) -- (3,-1) -- (4,-1) -- (5,-1) -- (6,-1);
\draw (-0.5,-2) -- (-1,-1) -- (-1.5,-2);
\draw (2,-1) -- (2,-2);
\fill (0,-1) circle (0.5ex);
\fill (1,-1) circle (0.5ex);
\fill (2,-1) circle (0.5ex);
\fill (3,-1) circle (0.5ex);
\fill (4,-1) circle (0.5ex);
\fill (5,-1) circle (0.5ex);
\fill (6,-1) circle (0.5ex);
\fill (2,-2) circle (0.5ex);

\fill (-8,-3) circle (0.5ex);
\fill (-7,-3) circle (0.5ex);
\fill (-6,-3) circle (0.5ex);
\fill (-5,-3) circle (0.5ex);
\fill (-4,-3) circle (0.5ex);
\fill (-3,-3) circle (0.5ex);
\fill (-2,-3) circle (0.5ex);
\fill (-1,-3) circle (0.5ex);
\fill (-2,-4) circle (0.5ex);
\draw (-8,-3) -- (-7,-3) -- (-6,-3) -- (-5,-3) -- (-4,-3) -- (-3,-3) -- (-2,-3) -- (-1,-3) -- (0,-3) -- (1,-3) -- (2,-3) -- (3,-3) -- (4,-3) -- (5,-3) -- (6,-3) -- (7,-3);
\draw (-2,-3) -- (-2,-4);
\fill (0,-3) circle (0.5ex);
\fill (1,-3) circle (0.5ex);
\fill (2,-3) circle (0.5ex);
\fill (3,-3) circle (0.5ex);
\fill (4,-3) circle (0.5ex);
\fill (5,-3) circle (0.5ex);
\fill (6,-3) circle (0.5ex);
\fill (7,-3) circle (0.5ex);
\fill (2.5,-4) circle (0.5ex);
\fill (3.5,-4) circle (0.5ex);
\draw (2.5,-4) -- (3,-3) -- (3.5,-4);

\fill (-7,-5) circle (0.5ex);
\fill (-6,-5) circle (0.5ex);
\fill (-5,-5) circle (0.5ex);
\fill (-4,-5) circle (0.5ex);
\fill (-3,-5) circle (0.5ex);
\fill (-2,-5) circle (0.5ex);
\fill (-1,-5) circle (0.5ex);
\fill (-1,-6) circle (0.5ex);
\fill (-1,-7) circle (0.5ex);
\fill (-1,-8) circle (0.5ex);
\draw (-7,-5) -- (-6,-5) -- (-5,-5) -- (-4,-5) -- (-3,-5) -- (-2,-5) -- (-1,-5) -- (0,-5) -- (1,-5) -- (2,-5) -- (3,-5) -- (4,-5) -- (5,-5) -- (6,-5);
\draw (-1,-5) -- (-1,-6) -- (-1,-7) -- (-1,-8);
\fill (0,-5) circle (0.5ex);
\fill (1,-5) circle (0.5ex);
\fill (2,-5) circle (0.5ex);
\fill (3,-5) circle (0.5ex);
\fill (4,-5) circle (0.5ex);
\fill (5,-5) circle (0.5ex);
\fill (6,-5) circle (0.5ex);
\fill (2.5,-6) circle (0.5ex);
\fill (1.5,-6) circle (0.5ex);
\draw (1.5,-6) -- (2,-5) -- (2.5,-6);
\node at (8,1) {$T_1$};
\node at (8,-1) {$T_2$};
\node at (8,-3) {$T_3$};
\node at (8,-5) {$T_4$};
\end{tikzpicture}
  \caption{All the non-spider CPET trees with $\Delta=4$ and $n \le 21$}\label{fig-CPET}
\end{figure}

Moreover, we found that each of the above four trees satisfies $e_{(3,2^{(n-3)/2})} < 0$, where $n$ is the number of vertices of the tree and by convention $2^k$ means there are $k$ parts equal to 2. This coefficient was also studied by Zheng \cite{Zhe22} when determining the $e$-positivity of certain spiders. These observations inspire us to derive the following general formula.

\begin{thm}\label{thm-sttt}
Let $T$ be a tree with $n=s+kt$ vertices with $s,t \ge 2$ being coprime positive integers. Then
\[
[e_{(s,t^k)}]X_T = \sum_{i=0}^k \sum_{\lambda \vdash (k-i)} (-1)^{k-\ell(\lambda)} t^{\ell(\lambda)}(s+it) b_{(s+it ,t\lambda)},
\]
where $t\lambda = (t\lambda_1,\ldots,t\lambda_{\ell})$.
\end{thm}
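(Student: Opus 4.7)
The plan is to apply Theorem \ref{thm-e-coe} with $\lambda = (s,t^k)$, classify the partitions $\mu$ of which $(s,t^k)$ is a refinement, and then compute the brick tabloid weights $w(B_{(s,t^k),\mu})$ explicitly.

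First I would describe the relevant $\mu$. Any $\mu$ with $(s,t^k)\dashv \mu$ is obtained by partitioning the multiset of parts $\{s,t,\ldots,t\}$ into blocks and summing within each block. Here the coprimality of $s$ and $t$ plays the essential role: if $s$ lies in a block together with $j$ copies of $t$ producing a merged part $s+jt$, then no other block can contain $s$; and any block consisting solely of copies of $t$ sums to a multiple of $t$, so it corresponds to a part $t\lambda_r$ for some positive integer $\lambda_r$. Crucially, the part $s$ \emph{cannot} be hidden inside a "$t\lambda_r$" part of $\mu$, because that would force $s\equiv 0 \pmod{t}$, contradicting $\gcd(s,t)=1$ and $t\ge 2$. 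Therefore every $\mu$ with $(s,t^k)\dashv\mu$ has the form $\mu_{i,\lambda}:=(s+it,\,t\lambda_1,\ldots,t\lambda_\ell)$ for some $0\le i\le k$ and some $\lambda=(\lambda_1,\ldots,\lambda_\ell)\vdash (k-i)$, and $\ell(\mu_{i,\lambda})=\ell(\lambda)+1$.

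Next I would compute $w(B_{(s,t^k),\mu_{i,\lambda}})$. The first row of $\mu_{i,\lambda}$ has length $s+it$ and must be tiled by the single $s$-brick together with exactly $i$ copies of the $t$-brick; each subsequent row of length $t\lambda_r$ admits only the obvious tiling by $\lambda_r$ copies of the $t$-brick, with a last brick of length $t$. So the product of the "ending brick lengths" in rows $2,\ldots,\ell+1$ is $t^{\ell(\lambda)}$ for every tabloid. For the first row there are $i+1$ possible positions for the $s$-brick: one puts $s$ at the end (contributing $s$) and the other $i$ put a $t$-brick at the end (each contributing $t$). Summing these contributions and multiplying by the fixed $t^{\ell(\lambda)}$ from the lower rows yields
\[
w(B_{(s,t^k),\mu_{i,\lambda}}) = \bigl(s+it\bigr)\, t^{\ell(\lambda)}.
\]
Then I would substitute into Theorem \ref{thm-e-coe}: since $\ell((s,t^k))-\ell(\mu_{i,\lambda})=k-\ell(\lambda)$, we obtain
\[
[e_{(s,t^k)}]X_T = \sum_{i=0}^{k}\sum_{\lambda\vdash (k-i)}(-1)^{k-\ell(\lambda)}\,(s+it)\,t^{\ell(\lambda)}\,b_{(s+it,\,t\lambda)},
\]
which is exactly the claimed identity.

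I expect the main (mild) obstacle to be the coprimality argument that rules out $s$ appearing in a "$t\lambda_r$" part — this is what makes the classification clean enough to write the sum with $t\lambda$ notation. The brick counting itself is routine once the shape of $\mu$ is pinned down, since each row is forced except for the free placement of the single $s$-brick within the first row.
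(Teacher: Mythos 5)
Your proposal is correct and follows essentially the same route as the paper: apply Theorem \ref{thm-e-coe} with $\lambda=(s,t^k)$, use coprimality of $s$ and $t$ to show every coarsening $\mu$ has the form $(s+it,t\lambda)$ with the $s$-brick confined to the row of length $s+it$, and compute $w(B_{(s,t^k),(s+it,t\lambda)})=(s+it)t^{\ell(\lambda)}$ by the $i+1$ placements of the $s$-brick. No gaps; the argument matches the paper's proof step for step.
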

\begin{proof}
By Theorem \ref{thm-e-coe} we have
\[
[e_{(s,t^k)}]X_T = \sum_{\mu \dashv (s,t^k)} (-1)^{k+1-\ell(\mu)} b_{\mu} w(B_{(s,t^k),\mu}).
\]
If $(s,t^k)$ is a refinement of $\mu$, then there is exactly one part of $\mu$ which is not divisible by $t$, and its size has the form $s+it$ for some $0 \le i \le k$. In addition, the other parts of $\mu$ are all divisible by $t$, and hence we can arrange them into a partition $t\lambda$ for some $\lambda \vdash (k-i)$.

Then we proceed to calculate $w(B_{(s,t^k),(s+it,t\lambda)})$. At first, the brick of size $s$ must lie in the part of $s+it$ since $s$ and $t$ are coprime, and this brick must be inserted between or on one side of the $i$ bricks of size $t$. Moreover, the positions of the bricks with size $t$ are unique in the parts of $t\lambda$ since changing the order of bricks with the same size will not change the final brick tabloid. Hence we have
\[
w(B_{(s,t^k),(s+it,t\lambda)}) = s \cdot t^{\ell(\lambda)} + i \cdot t \cdot t^{\ell(\lambda)} = (s+it) t^{\ell(\lambda)}.
\]
In Figure \ref{fig-brick-stt} we provide an illustration of the form of the brick tabloids in $B_{(s,t^k),(s+it,t\lambda)}$, where all the unlabeled bricks have size $t$. It is straightforward to check that only the position of the brick with size $s$ varies.
\begin{figure}[htbp]
  \centering
\begin{tikzpicture}[scale = 1.2]
\draw[rounded corners=5pt]  (-4,4) rectangle (-3,3.5);
\draw[rounded corners=5pt]  (-5,4) rectangle (-4,3.5);
\draw[rounded corners=5pt]  (-5,4.5)  rectangle (-4,4);
\draw[rounded corners=5pt]  (-4,3.5)  rectangle (-2.5,3);
\draw[rounded corners=5pt]  (-5,3.5)  rectangle (-4,3);
\draw[rounded corners=5pt]  (-2.5,3.5)  rectangle (-1.5,3);
\node at (-3.25,3.25) {$s$};

\draw [rounded corners=5pt]  (-6.5,3.5) rectangle (-5.5,3);
\draw [rounded corners=5pt]  (-7.5,3.5) rectangle (-6.5,3);
\draw [rounded corners=5pt] (-9,3.5) rectangle (-7.5,3);
\draw [rounded corners=5pt]  (-9,4) rectangle (-8,3.5);
\draw [rounded corners=5pt] (-8,4) rectangle (-7,3.5);
\draw [rounded corners=5pt] (-9,4.5) rectangle (-8,4);
\node at (-8.25,3.25) {$s$};

\draw [rounded corners=5pt]  (-1,3.5) rectangle (0,3);
\draw [rounded corners=5pt]  (0,3.5) rectangle (1,3);
\draw [rounded corners=5pt]  (1,3.5) rectangle (2.5,3);
\draw [rounded corners=5pt]  (-1,4) rectangle (0,3.5);
\draw [rounded corners=5pt]  (0,4) rectangle (1,3.5);
\draw [rounded corners=5pt]  (-1,4.5) rectangle (0,4);
\node at (1.75,3.25) {$s$};
\end{tikzpicture}
  \caption{An illustration for the brick tabloids in $B_{(s,t^k),(s+it,t\lambda)}$}\label{fig-brick-stt}
\end{figure}

Summarizing the above results we obtain
\begin{align*}
[e_{(s,t^k)}]X_T =& \sum_{i=0}^k \sum_{\lambda \vdash (k-i)} (-1)^{k+1-(\ell(\lambda)+1)} b_{(s+it,t\lambda)} w(B_{(s,t^k),(s+it,t\lambda)}) \\
=& \sum_{i=0}^k \sum_{\lambda \vdash (k-i)} (-1)^{k-\ell(\lambda)} t^{\ell(\lambda)}(s+it) b_{(s+it ,t\lambda)},
\end{align*}
as desired.
\end{proof}
\begin{rem}
The restriction that $s,t$ are coprime may be relaxed in several ways. In fact, in the above proof the coprime condition is used to determine the partitions $\mu$ and the position of the brick of size $s$, which is also straightforward if $s>kt$ or $t=ms$ for some $m \ge 2$.
\end{rem}

We also derive the following necessary condition, which involves much fewer terms than the above theorem.
\begin{cor}\label{cor-reduced-formula}
Let $T$ be an $e$-positive tree with $n=s+kt$ vertices with $s,t \ge 2$ being coprime positive integers. Then
\[
\sum_{\lambda \vdash k} (-1)^{k-\ell(\lambda)} t^{\ell(\lambda)} b_{(s,t\lambda)} \ge 0
\]
\end{cor}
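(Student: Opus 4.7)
The plan is to exploit $e$-positivity at two specific partitions, namely $(s,t^k)$ and $(s+t,t^{k-1})$, and observe that an almost-telescoping cancellation in Theorem~\ref{thm-sttt} reduces the sum of the corresponding coefficients to exactly $s$ times the quantity we want to bound. Setting
\[
f_j(x) := \sum_{\lambda \vdash j} (-1)^{j-\ell(\lambda)}\, t^{\ell(\lambda)}\, b_{(x,t\lambda)},
\]
the goal becomes $f_k(s) \ge 0$.

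First, I would rewrite Theorem~\ref{thm-sttt} in the compact form
\[
[e_{(s,t^k)}]X_T \;=\; \sum_{i=0}^{k} (-1)^i (s+it)\, f_{k-i}(s+it),
\]
which comes from absorbing the sign $(-1)^{k-\ell(\lambda)} = (-1)^i\cdot(-1)^{(k-i)-\ell(\lambda)}$ into the definition of $f_{k-i}$. Next I would apply the same theorem with parameters $(s+t,\,k-1)$ in place of $(s,k)$; this is legal because $n=(s+t)+(k-1)t$, $s+t>t\ge 2$, and $\gcd(s+t,t)=\gcd(s,t)=1$. After the reindexing $j=i+1$, this yields
\[
[e_{(s+t,t^{k-1})}]X_T \;=\; -\sum_{j=1}^{k} (-1)^j (s+jt)\, f_{k-j}(s+jt),
\]
and adding the two identities makes every term with index $\ge 1$ cancel, leaving
\[
[e_{(s,t^k)}]X_T + [e_{(s+t,t^{k-1})}]X_T \;=\; s\, f_k(s).
\]
Since $T$ is $e$-positive, both coefficients on the left are non-negative, and $s\ge 2>0$, so $f_k(s)\ge 0$, which is exactly the desired inequality.

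I expect no serious obstacle: the only routine verifications are the coprimality $\gcd(s+t,t)=1$ and the inequality $s+t>t$, both of which are immediate, together with the trivial base case $k=0$ (where $f_0(s)=b_{(s)}=1$). The conceptual content is simply that the alternating sums from two consecutive applications of Theorem~\ref{thm-sttt} destructively interfere on the tail $i\ge 1$, isolating the $i=0$ term, which is proportional to $f_k(s)$; the $e$-positivity assumption is only invoked at two partitions rather than at the long list of refinements $(s,t\lambda)$ that appear inside $f_k(s)$.
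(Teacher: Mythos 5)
Your proposal is correct and follows essentially the same route as the paper: you apply Theorem~\ref{thm-sttt} at both $(s,t^k)$ and $(s+t,t^{k-1})$ (noting $\gcd(s+t,t)=1$), cancel the common tail to isolate the $i=0$ term $s\sum_{\lambda\vdash k}(-1)^{k-\ell(\lambda)}t^{\ell(\lambda)}b_{(s,t\lambda)}$, and conclude from the non-negativity of the two $e$-coefficients. The only cosmetic difference is that you add the two identities while the paper substitutes one into the other, which is the same cancellation.
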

\begin{proof}
Note that $s+t$ is also coprime to $t$. By Theorem \ref{thm-sttt} we have
\begin{align*}
  [e_{(s,t^k)}]X_T =& \sum_{i=0}^k \sum_{\lambda \vdash (k-i)} (-1)^{k-\ell(\lambda)} t^{\ell(\lambda)}(s+it) b_{(s+it ,t\lambda)}, \\
  [e_{(s+t,t^{k-1})}]X_T =& \sum_{i=0}^{k-1} \sum_{\lambda \vdash (k-1-i)} (-1)^{k-1-\ell(\lambda)} t^{\ell(\lambda)}(s+t+it) b_{(s+t+it ,t\lambda)} \\
  =& \sum_{i=1}^{k} \sum_{\lambda \vdash (k-i)} (-1)^{k-1-\ell(\lambda)} t^{\ell(\lambda)}(s+it) b_{(s+it ,t\lambda)}.
\end{align*}
Substituting the second formula into the first one, we have
\[
0 \le [e_{(s,t^k)}]X_T = \sum_{\lambda \vdash k} (-1)^{k-\ell(\lambda)} s t^{\ell(\lambda)} b_{(s,t\lambda)} - [e_{(s+t,t^{k-1})}]X_T \le \sum_{\lambda \vdash k} (-1)^{k-\ell(\lambda)} s t^{\ell(\lambda)} b_{(s,t\lambda)},
\]
since $T$ is $e$-positive. Then the proof follows from $s>0$.
\end{proof}

\begin{exam}\label{ex-34444}
Using Theorem \ref{thm-sttt} or Corollary \ref{cor-reduced-formula}, we can show that the coefficients $e_{(3,2^7)}X_{T_1}$, $e_{(3,2^7)}X_{T_2}$, $e_{(3,2^8)}X_{T_3}$, $e_{(3,2^8)}X_{T_4}$ are all negative for the trees in Figure \ref{fig-CPET}. For the convenience of illustration we choose another coefficient and present the calculation. Taking the tree $T_4$ and $s=3,k=t=4$, we have
\begin{align*}
\sum_{\lambda \vdash k} (-1)^{k-\ell(\lambda)} t^{\ell(\lambda)} b_{(s,t\lambda)} =& 4^4 b_{(4,4,4,4,3)} - 4^3 b_{(8,4,4,3)} + 4^2 (b_{(12,4,3)}+b_{8,8,3}) - 4 b_{(16,3)} \\
=& 4^4 \cdot 1 - 4^3 \cdot 6 + 4^2 (6+2) - 4 \cdot 3 = -12 < 0,
\end{align*}
and hence $T$ is not $e$-positive.
\end{exam}

From Example \ref{ex-34444} (and also Theorem \ref{thm-epos-equiv}) one sees that the negativity of $e_{(4^4,3)}$ (resp. $e_{(3,2^8)}$) of this tree should be attributed to the fact that $b_{(4^4,3)}=1$ (resp. $b_{(3,2^8)}=1$). Roughly speaking, in Corollary \ref{cor-reduced-formula} the weight of $b_{(s,t^k)}$ is the highest ($=t^k$), and hence the sum is more likely to be negative if $b_{(s,t^k)}$ is small.

Naturally we guess that any (non-spider) CPET trees satisfies $b_{(3,2^{(n-3)/2})}=1$ and $e_{(3,2^{(n-3)/2})}<0$. Whether it is valid or not relies on the characterization of CPET trees, for which we shall present some observations and propose several open problems in the remaining part of this section. At first we recall Tom's result.
\begin{thm}[\cite{Tom26}]\label{thm-Tom}
If there is a vertex $v$ of a tree $T$ such that $T-v$ has 4 connected components such that each component has at least two vertices, then $T$ is missing a connected partition of some type and is not $e$-positive.
\end{thm}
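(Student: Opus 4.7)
Denote the four components of $T - v$ by $T_1, T_2, T_3, T_4$, with $n_i := |V(T_i)| \ge 2$, and let $u_i$ be the unique neighbor of $v$ in $T_i$. Set $N := \max_i n_i$ and $n := |V(T)| = 1 + n_1 + n_2 + n_3 + n_4$. My plan is to exhibit an integer partition $\lambda \vdash n$ such that $T$ admits no connected partition of type $\lambda$; Proposition \ref{prop-tree-epos-cp} then gives non-$e$-positivity.

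I would first record the structural observation that in any connected partition of $T$, the block $B_v$ containing $v$ has the form
\[
B_v = \{v\} \cup S_1 \cup S_2 \cup S_3 \cup S_4,
\]
where each $S_i \subseteq T_i$ is either empty or a connected subgraph of $T_i$ containing $u_i$. This is forced because the only neighbors of $v$ in $T$ are $u_1, \ldots, u_4$, so any $S_i$ that is joined to $v$ inside $B_v$ must contain $u_i$. Consequently, every block other than $B_v$ lies entirely in a single $T_i$ and hence has at most $N$ vertices.

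The primary candidate I would try is the two-part partition $\lambda = (N+1, n-N-1)$. Both parts strictly exceed $N$, so neither can be realized as a non-$B_v$ block; since only one block contains $v$, this forces a contradiction and $\lambda$ is missing. This argument succeeds whenever $n - N - 1 \ge N + 1$, equivalently $n_1 + n_2 + n_3 \ge N + 1$. Since $n_i \ge 2$ gives $n_1 + n_2 + n_3 \ge 6$, this already handles every tree with $N \le 5$.

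The main obstacle I anticipate is the dominant-branch regime $N \ge 6$ with $n_1 + n_2 + n_3 \le N$, where the two-part $\lambda$ above has its smaller part $\le N$ and so could conceivably be absorbed inside $T_4$. My fallback is to choose $\lambda \vdash n$ with all parts in an interval $[N_3 + 1, n - N - 1]$, where $N_3 := \max\{n_i : n_i \ne N\}$; such a choice forces $S_i = T_i$ for every $i$ with $n_i \le N_3$ (no non-$B_v$ block inside a small branch can meet the minimum part size), so $|B_v| \ge 1 + n_1 + n_2 + n_3 = n - N$, contradicting the fact that every part of $\lambda$ is at most $n - N - 1$. The interval is nonempty because $n \ge N + N_3 + 5$, and in most instances $n$ can be written as a sum of integers in this interval by taking mostly copies of $N_3 + 1$ and enlarging one or two final parts. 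The delicate point I expect to confront is the boundary cases in which the interval becomes too narrow for $n$ to be expressible; there I would refine the plan by choosing parts in a tailored set $\{a, a+1\}$ selected so that neither $T_3$ nor $T_4$ can be tiled by sub-blocks of those sizes, and then verify the obstruction via a case analysis on the shapes of the two larger branches.
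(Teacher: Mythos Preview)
The paper does not prove this theorem; it is quoted from Tom \cite{Tom26} without proof, so there is no in-paper argument to compare against. Assessing your proposal on its own merits:

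Your structural observation about $B_v$ is correct, and the two-part partition $\lambda=(N+1,\,n-N-1)$ cleanly disposes of the balanced regime $n_1+n_2+n_3\ge N+1$. The genuine gap is the dominant-branch regime. Your fallback---taking all parts of $\lambda$ in the interval $[N_3+1,\,n-N-1]$---does give a valid obstruction \emph{whenever such a $\lambda\vdash n$ exists}, but such a partition need not exist. Concretely, take $n_1=n_2=2$, $n_3=100$, $n_4=104$; then $N=104$, $N_3=100$, $n=209$, and the interval is $\{101,102,103,104\}$. Two parts from this set sum to at most $208$ and three parts sum to at least $303$, so $209$ is unreachable. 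Your interval has width $n_1+n_2-1$, which can be as small as $3$ while $n$ can be arbitrarily large, so this is not an isolated boundary case but a structural failure of the method.

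The refinement you sketch---replacing the interval by a tailored pair $\{a,a+1\}$ and arguing that the two large branches cannot be tiled---is only a gesture: you give no rule for selecting $a$, no argument that a suitable $a$ exists, and no indication of what the ``case analysis on the shapes of the two larger branches'' would look like. In fact the realizable types in this regime depend on the internal structure of $T_3$ and $T_4$ (e.g.\ whether $T_4$ is a path with $u_4$ at one end), so the missing piece is not routine bookkeeping but the actual combinatorial heart of the argument. As written, the dominant-branch case is unproved.
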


By Figure \ref{fig-CPET} the CPET trees seem to satisfy a more stronger condition that deleting the vertex $v$ with degree 4 yields four connected components where two components have exactly one vertex. It is easy to see that if a tree $T$ on $n$ vertices has such a vertex $v$, then it is missing a partition of type $(2^{n/2})$ if $n$ is even, and $b_{(3,2^{(n-3)/2})} \le 1$ if $n$ is odd. Hence we have the following problem.

\begin{prob}\label{prob-ab11}
Let $T$ be a CPET tree with $\Delta(T)=4$. Is it true that there exists a vertex $v$ in $T$ with degree 4 is adjacent to at least two leaves? Moreover, is it true that there is exactly one vertex in $T$ with degree 4?
\end{prob}
We also want to investigate the numbers of vertices for CPET trees.
\begin{prob}\label{prob-prime}
Is it true that for any non-spider CPET tree with $\Delta=4$ the number its vertices is a prime number?
\end{prob}
This statement is not true for spiders since $S(12,10,1,1)$ has 25 vertices, whose CPET property was proved by Tom \cite{Tom26} as mentioned before. For general trees we also have a weaker statement.
\begin{prob}\label{prob-even}
Is it true that any CPET tree with $\Delta=4$ has an odd number of vertices?
\end{prob}
Finally, we propose the problem on the coefficient $e_{(3,2^{(n-3)/2})}$.
\begin{prob}\label{prob-322}
Is it true that $[e_{(3,2^{(n-3)/2})}]X_T<0$ for any CPET tree with $\Delta=4$ and having an odd number $n$ of vertices?
\end{prob}
We should remark that if both Problem \ref{prob-even} and Problem \ref{prob-322} have positive answers, then Conjecture \ref{conj-nonepos} will be true. Hence we believe the key point for solving this conjecture is the characterization of structure of CPET trees.

\vspace{0.5cm}

\noindent{\bf Acknowledgments.}
The author is supported by the National Natural Science Foundation of China (No. 12501462) and the Fundamental Research Funds for the Central
Universities (GK202207023).


\begin{thebibliography}{99}

\bibitem{AdOZ23} J. Aliste-Prieto, A. de Mier, R. Orellana, J. Zamora, Marked graphs and the chromatic symmetric function, SIAM J. Discrete Math. 37 (2023), no.3, 1881–1919.

\bibitem{ANRS24} G. Arunkumar, N. Narayanan, B.V.R. Rao and S.S. Sawant, Proper $q$-caterpillars are distinguished by their Chromatic Symmetric Functions, Discrete Math. 347 (2024), no. 11, Paper No. 114162, 8 pp.

\bibitem{BC18} P. Brosnan, T.Y. Chow, Unit interval orders and the dot action on the cohomology of regular semisimple Hessenberg varieties, Adv. Math. 329 (2018), 955–1001.

\bibitem{CS20} L. Crew, S. Spirkl, A deletion-contraction relation for the chromatic symmetric function, European J. Combin. 89 (2020), 103143, 20 pp.

\bibitem{DFvW20} S. Dahlberg, A. Foley, S. van Willigenburg,
Resolving Stanley's $e$-positivity of claw-contractible-free graphs, J. Eur. Math. Soc. 22 (2020), no. 8, 2673–2696.

\bibitem{DSvW20} D. Dahlberg, A. She, S. van Willigenburg, Schur and $e$-positivity of trees and cut vertices, Electron. J. Combin. 27 (2020), no. 1, Paper No. 1.2, 22 pp.

\bibitem{Gas96} V. Gasharov, Incomparability graphs of (3+1)-free posets are $s$-positive, Discrete Math. 157 (1996), no. 1-3, 193–197.

\bibitem{Gas99} V. Gasharov, On Stanley's chromatic symmetric function and clawfree graphs, Discrete Math. 205 (1999), no. 1-3, 229–234.

\bibitem{GS01} D.D. Gebhard, B.E. Sagan, A chromatic symmetric function in noncommuting variables, J. Algebraic Combin. 13 (2001), no. 3, 227–255.

\bibitem{GMRWW25} S.T. Griffin, A. Mellit, M. Romero, K. Weigl, J.J. Wen, On Macdonald expansions of $\alpha$-chromatic symmetric functions and the Stanley-Stembridge Conjecture, arXiv:2504.06936, 2025.

\bibitem{G-P13} M. Guay-Paquet, A modular law for the chromatic symmetric functions of (3+1)-free posets, arXiv:1306.2400, 2013.

\bibitem{Hai93} M. Haiman, Hecke algebra characters and immanant conjectures, J. Amer. Math. Soc. 6 (1993), no. 3, 569–595.

\bibitem{HJ19} S. Heil, C. Ji, On an algorithm for comparing the chromatic symmetric functions of trees, Australas. J. Combin. 75 (2019), 210–222.

\bibitem{Hik24} T. Hikita, A proof of the Stanley-Stembridge conjecture, arXiv:2410.12758, 2024.

\bibitem{LS19} M. Loebl, J.-S. Sereni, Isomorphism of weighted trees and Stanley’s isomorphism conjecture for caterpillars, Ann. Inst. Henri Poincaré D 6 (2019), no. 3, 357–384.

\bibitem{Mac15} I.G. Macdonald, Symmetric functions and Hall polynomials, Second edition, The Clarendon Press, Oxford University Press, New York, 2015. xii+475 pp.

\bibitem{MMW11} J.L. Martin, M. Morin, J.D. Wagner, On distinguishing trees by their chromatic symmetric functions, J. Combin. Theory Ser. A 115 (2008), no. 2, 237–253.

\bibitem{NW99} S.D. Noble, D.J.A. Welsh, A weighted graph polynomial from chromatic invariants of knots, Ann. Inst. Fourier (Grenoble) 49 (1999), no. 3, 1057–1087.


\bibitem{OS14} R. Orellana, G. Scott, Graphs with equal chromatic symmetric functions, Discrete Math. 320 (2014), 1–14.

\bibitem{RS15} A. Mendes, J. Remmel, Counting with symmetric functions, Dev. Math. 43, Springer, Cham, 2015. x+292 pp.

\bibitem{SW16} J. Shareshian, M.L. Wachs, Chromatic quasisymmetric functions, Adv. Math. 295 (2016), 497–551.

\bibitem{Sta95} R.P. Stanley, A symmetric function generalization of the chromatic polynomial of a graph, Adv. Math. 111 (1995), no. 1, 166–194.

\bibitem{Sta98} R.P. Stanley, Graph colorings and related symmetric functions: ideas and applications: a description of results, interesting applications, \& notable open problems, Discrete Math. 193 (1998), no. 1-3, 267–286.

\bibitem{StaEC2} R.P. Stanley, Enumerative combinatorics, Vol. 2, Cambridge Stud. Adv. Math., 62 Cambridge University Press, Cambridge, 1999. xii+581 pp.

\bibitem{SS93} R.P. Stanley, J.R. Stembridge, On immanants of Jacobi-Trudi matrices and permutations with restricted position, J. Combin. Theory Ser. A 62 (1993), no. 2, 261–279.

\bibitem{sage} SageMath, the Sage Mathematics Software System (Version 10.7), The Sage Developers, 2025, https://www.sagemath.org.

\bibitem{TWW24} D.Q.B. Tang, D.G.L. Wang, M.M.Y. Wang, The spiders $S(4m+2,2m,1)$ are $e$-positive, arXiv:2405.04915v2, 2024.

\bibitem{Tom26} F. Tom, On $e$-positivity of trees and connected partitions, Adv. in Appl. Math. 175 (2026), Paper No. 103044, 18 pp.

\bibitem{WW20} D.G.L. Wang, M.M.Y. Wang, The $e$-positivity and Schur positivity of some spiders and broom trees, Discrete Appl. Math. 325 (2023), 226–240.

\bibitem{WYZ24} Y. Wang, X. Yu, X.-D. Zhang, A class of trees determined by their chromatic symmetric functions, Discrete Math. 347 (2024), no. 9, Paper No. 114096, 11 pp.

\bibitem{Wol98} M. Wolfe, Symmetric chromatic functions, Pi Mu Epsil. J. 10 (1998), 643–757.

\bibitem{Wol97} H.L. Wolfgang, Two interactions between combinatorics and representation theory: Monomial immanants and Hochschild cohomology, PhD thesis, Massachusetts Institute of Technology, 1997.

\bibitem{Zhe22} K. Zheng, On the e-positivity of trees and spiders, J. Combin. Theory Ser. A 189 (2022), Paper No. 105608, 22 pp.

\end{thebibliography}
\end{document}